%
%
%
\documentclass{amsproc}

\newtheorem{theorem}{Theorem}[section]
\newtheorem{lemma}[theorem]{Lemma}
\newtheorem{prop}[theorem]{Proposition}

\theoremstyle{definition}
\newtheorem{definition}[theorem]{Definition}

\theoremstyle{remark}
\newtheorem{remark}[theorem]{Remark}

\numberwithin{equation}{section}



\begin{document}

\def \Z{\Bbb Z}
\def \C{\Bbb C}
\def \R{\Bbb R}
\def \Q{\Bbb Q}
\def \N{\Bbb N}

\def \A{{\mathcal{A}}}
\def \D{{\mathcal{D}}}
\def \E{{\mathcal{E}}}
\def \E{{\mathcal{E}}}
\def \H{\mathcal{H}}
\def \S{{\mathcal{S}}}
\def \wt{{\rm wt}}
\def \tr{{\rm tr}}
\def \span{{\rm span}}
\def \Res{{\rm Res}}
\def \Der{{\rm Der}}
\def \End{{\rm End}}
\def \Ind {{\rm Ind}}
\def \add {{\rm add}}
\def \Irr {{\rm Irr}}
\def \Aut{{\rm Aut}}
\def \GL{{\rm GL}}
\def \Hom{{\rm Hom}}
\def \mod{{\rm mod}}
\def \ann{{\rm Ann}}
\def \ad{{\rm ad}}
\def \rank{{\rm rank}\;}
\def \<{\langle}
\def \>{\rangle}

\def \g{{\frak{g}}}
\def \h{{\hbar}}
\def \k{{\frak{k}}}
\def \sl{{\frak{sl}}}
\def \gl{{\frak{gl}}}

\newcommand{\m}{\bf m}
\newcommand{\n}{\bf n}
\newcommand{\nno}{\nonumber}
\newcommand{\nord}{\mbox{\scriptsize ${\circ\atop\circ}$}}

\title{Remarks on $\phi$-coordinated modules for quantum vertex algebras}

\author{Haisheng Li}
\address{Department of Mathematical Sciences, Rutgers University, Camden,
New Jersey 08102, and School of Mathematical Sciences,
Xiamen University, Fujian, China }
\email{hli@camden.rutgers.edu}
\thanks{The author was supported in part by China NSF Grants \#11471268, 11571391.}


\subjclass{Primary 17B69, 17B65; Secondary 17B10, 81R10}
\date{January 1, 1994 and, in revised form, June 22, 1994.}

\dedicatory{To James Lepowsky and Robert Wilson on Their 70th Birthdays}

\keywords{Quantum vertex algebra, $\phi$-coordinated module, formal group, associate}

\begin{abstract}
This paper is about $\phi$-coordinated modules for weak quantum vertex algebras. Among the main results, 
several canonical connections among $\phi$-coordinated modules for different $\phi$ are established. 
For vertex operator algebras, a reinterpretation of Frenkel-Huang-Lepowsky's
theorem on contragredient module is given in terms of $\phi$-coordinated modules.
\end{abstract}

\maketitle

\section{Introduction}
Partially motivated by Etingof-Kazhdan's notion of quantum vertex operator algebra (see \cite{EK}), we have developed a theory of (weak) quantum vertex algebras  (see \cite{Li-nonlocal, Li-const, Li-phi, Li-equiv}), where
the notion of (weak) quantum vertex algebra generalizes the notions of vertex algebra, vertex super-algebra, 
and vertex color-algebra (see \cite{xu1}, \cite{xu2}) in a certain natural way.
To associate quantum vertex algebras to quantum affine algebras, in \cite{Li-phi} we developed 
a theory of $\phi$-coordinated (quasi) modules for a weak quantum vertex algebra $V$, 
where $\phi$ is what we called an associate of the one-dimensional additive formal group (law) $F_{\add}(x,y)=x+y$.  

By definition  (cf. \cite{haz}), a general one-dimensional {\em formal group} (over $\C$) is  
a formal power series $F(x,y)\in \C[[x,y]]$, satisfying the conditions
$$F(x,0)=x,\   \   F(0,y)=y, \    \   F(F(x,y),z)=F(x,F(y,z)).$$
As it was pointed out by Borcherds  (see \cite{Bo}), the formal group law $F_{\add}(x,y)$ underlies
the theory of vertex  algebras and modules. This is more apparent from the associativity:
$$Y(u,z+x)Y(v,x)``="Y(Y(u,z)v,x).$$
This is also the case for the theory of (weak) quantum vertex algebras and modules, in which the same associativity 
is postulated though the usual locality (commutativity) is generalized to a braided locality. 
Moreover, formal group law $F_{\add}(x,y)$ also underlies the theory of twisted modules (see \cite{flm}, \cite{ffr}, \cite{dong},
\cite{xu1}, \cite{Li-twisted}, \cite{LTW}).

An {\em associate} of $F_{\add}(x,y)$ (see \cite{Li-phi}) is defined  to be a formal series $\phi(x,z)\in \C((x))[[z]]$, 
satisfying the conditions
\begin{eqnarray*}
\phi(x,0)=x,\    \   \    \   \phi(\phi(x,y),z)=\phi(x,y+z)\ \left(=\phi(x,F_{\add}(y,z))\right).
\end{eqnarray*}
Philosophically speaking, an associate to $F_{\add}(x,y)$ is the same as a $G$-set to a group $G$.
From the definition, the formal group $F_{\add}(x,y)$ itself is an associate.
It was proved in \cite{Li-phi} that for every $p(x)\in \C((x))$, $e^{zp(x)\frac{d}{dx}}(x)$ is an associate 
of $F_{\add}(x,y)$ and every associate can be obtained this way with $p(x)$ uniquely determined.
In particular, taking $p(x)=x^{n+1}$ for $n\in \Z$, we obtain associates
\begin{eqnarray}
\phi_{n}(x,z)=e^{zx^{n+1}\frac{d}{dx}}(x)=\begin{cases}
xe^{z}\   &\mbox{ for }n=0\\
  (x^{-n}-nz)^{-1/n}\ &\mbox{ for }n\ne 0
  \end{cases}
\end{eqnarray}
(cf. \cite{fhl}; (2.6.5)), where 
$$\phi_{-1}(x,z)=e^{z\frac{d}{dx}}(x)=x+z=F_{\add}(x,z).$$
The essence of \cite{Li-phi} is to attach a theory of $\phi$-coordinated modules
for any weak quantum vertex algebra $V$ to each associate $\phi(x,z)$ of $F_{\add}(x,y)$, where
the defining associativity of a $\phi$-coordinated $V$-module $(W,Y_{W})$ can be roughly described as
 \begin{eqnarray}
Y_{W}(u,x_1)Y_{W}(v,x)|_{x_1=\phi(x,z)}``="Y_{W}(Y(u,z)v,x).
\end{eqnarray}

To associate quantum vertex algebras to quantum affine algebras, we have mostly focused on  
$\phi$-coordinated quasi modules  with $\phi(x,z)=xe^{z}$ (see \cite{Li-phi, Li-equiv}). 
Indeed, by using this very theory, weak quantum vertex algebras
have been successfully  associated to quantum affine algebras {\em conceptually,}
 while the construction of an explicit association is in progress. 
 On the other hand, let $V$ be a vertex operator algebra, $(W,Y_{W})$ a $V$-module.
 For $v\in V$, set $X_{W}(v,x)=Y_{W}(x^{L(0)}v,x)$.
 It was shown in \cite{Li-vfa} that $(W,X_{W})$ carries the structure of a
 $\phi$-coordinated module with $\phi(x,z)=xe^{z}$ for the 
new vertex operator algebra obtained by Zhu (see \cite{zhu1, zhu2}) on $V$. 
This result is closely related to a previous result of Lepowsky (see \cite{lep1}). Further investigation  on
$\phi$-coordinated quasi modules with $\phi(x,z)=xe^{z}$ for weak quantum vertex algebras is yet to be done. 

In this paper, we explore canonical connections among  $\phi$-coordinated modules 
for a general weak quantum vertex algebra for various associates $\phi$. Let $\phi(x,z)$ be an associate of
$F_{\add}(x,y)$ and let $f(x)\in x\C[[x]]$ with $f'(0)\ne 0$. 
Set $\phi_{f}(x,z)=f^{-1}(\phi(f(x),z))$. We show that $\phi_{f}(x,z)$ is also an associate of $F_{\add}(x,y)$ and 
that for any weak quantum vertex algebra $V$, the category of $\phi$-coordinated $V$-modules 
is canonically isomorphic to the category of $\phi_{f}$-coordinated $V$-modules.
Furthermore, we show that an associate $\phi(x,z)$ satisfies the condition that $\phi_{f}(x,z)=x+z=F_{\add}(x,z)$ 
for some $f(x)\in x\C[[x]]$ with $f'(0)\ne 0$ if and only if $\phi(x,z)=e^{zp(x)\frac{d}{dx}}(x)$ for $p(x)\in \C[[x]]$ with $p(0)\ne 0$.

In vertex operator algebra theory,  contragredient module due to Frenkel-Huang-Lepowsky (see \cite{fhl}) plays a fundamental role. 
For any module $(W,Y_{W})$ over a vertex operator algebra $V$, the underlying space of
the contragredient module  is the restricted dual $W'=\oplus_{h\in \C}W_{(h)}^{*}$ of $W$, 
where the vertex operator map $Y_{W}'$ is defined by
$$\<Y_{W}'(v,x)\alpha,w\>=\< \alpha, Y_{W}(e^{xL(1)}(-x^{-2})^{L(0)}v,x^{-1})w\>$$
for $v\in V,\  \alpha\in W',\ w\in W$. In this theory,
 the Virasoro algebra, which is part of the vertex operator algebra structure of $V$, is essential.
 A natural question is what one can possibly get if $V$ is a weak quantum vertex algebra, in particular,  a general vertex algebra 
 without a conformal vector. 
 
Note that  the contragredient module theorem of  \cite{fhl} has been generalized by Huang-Lepowsky-Zhang (see \cite{hlz1,hlz2}) to 
 strongly graded conformal vertex algebras.

Let $V$ be a weak quantum vertex algebra, $(W,Y_{W})$ a $V$-module. 
Consider the full dual space $W^{*}$. For $v\in V$, we define
$Y_{W}^{*}(v,x)\in (\End W^{*})[[x,x^{-1}]]$ by
$$\<Y_{W}^{*}(v,x)\alpha,w\>=\<\alpha,Y_{W}(v,x^{-1})w\>$$
for $\alpha\in W^{*},\ w\in W$. Furthermore, we define $D(W)$ to be the subspace of $W^{*}$, 
consisting of each $\alpha$ such that
$$Y_{W}^{*}(v,x)\alpha \in W^{*}((x))\   \   \  \mbox{ for all }v\in V.$$
It is proved in this paper that $D(W)$ equipped with the vertex operator map $Y_{W}^{*}$ is a $\phi$-coordinated module
with $\phi(x,z)=\phi_{1}(x,-z)=\frac{x}{1+zx}$ for the opposite algebra $V^{o}$ (see \cite{sun}; cf. \cite{BK}), where 
the vacuum vector is the same as that of $V$ and the vertex operator map $Y^{o}(\cdot,x)$ is defined by
$$Y^{o}(u,x)v=e^{x\D}Y(v,-x)u\   \   \   \mbox{ for }u,v\in V.$$

Note that when $V$ is a vertex operator algebra, for any $V$-module $W$  
a subspace of $W^{*}$, which was also denoted by  $D(W)$,
was introduced  in \cite{Li-some} and it was proved that $D(W)$ with the vertex operator map $Y_{W}'$
is a weak $V$-module in the sense that $D(W)$ is a module for $V$ viewed as a vertex algebra.
This weak $V$-module structure was used therein to study certain finiteness properties of $W$.
 In fact, in this case the two definitions give the same space $D(W)$ though  the vertex operator maps differ by a twist. 
In this paper,  as a reinterpretation of the contragredient module theorem of \cite{fhl}, we show that 
if $V$ is a vertex operator algebra, the category of modules for $V$ viewed as a vertex algebra is canonically isomorphic to 
the category of $\phi$-coordinated $V$-modules with $\phi(x,z)=\frac{x}{1+zx}$.

This paper is organized as follows: 
In Section 2, we recall the basic results on associates of $F_{\add}(x,y)$ and on $\phi$-coordinated modules 
for a weak quantum vertex algebra, and we
show that the category of $\phi$-coordinated $V$-modules 
is isomorphic to the category of $\phi_{f}$-coordinated $V$-modules.
 In Section 3, we study the dual $D(W)$ of a module $W$ for a weak quantum vertex algebra $V$ and we 
give the equivalence between the categories of modules for $V$ viewed as a vertex algebra
and $\phi$-coordinated $V$-modules with $\phi(x,z)=\frac{x}{1+zx}$ for any vertex operator algebra $V$.

\section{$\phi$-coordinated modules for weak quantum vertex algebras}
In this section, we first recall the notion of associate for the one-dimensional additive formal group (law)
and the notion of $\phi$-coordinated module for a weak quantum vertex algebra $V$ with respect to an associate $\phi$, and we then
show that the category of $\phi$-coordinated $V$-modules 
is isomorphic to the category of $\phi_{f}$-coordinated $V$-modules..

We begin with the one-dimensional additive formal group (law), which is
the formal power series
 $$F_{\add}(x,y)=x+y\in \C[[x,y]].$$  
 (See for example \cite{haz} for the definition of a one-dimensional formal group.)
 
 The following notion was introduced in \cite{Li-phi}:
 
\begin{definition}\label{d-associate}
An {\em associate} of $F_{\add}(x,y)$ is a formal series $\phi(x,z) \in \C((x))[[z]]$ satisfying the conditions
\begin{eqnarray}
\phi(x,0)=x,\    \    \    \  \phi(\phi(x,y),z)=\phi(x,y+z).
\end{eqnarray}
\end{definition}

Note that every associate $\phi(x,z)$ is invertible in $\C((x))[[z]]$ as $\phi(x,0)=x$ is an invertible element of $\C((x))$.
Then $\phi(x,z)^{n}$ is a well defined element of $\C((x))[[z]]$ for every integer $n$.

The following result was also obtained in  \cite{Li-phi}:

\begin{prop}\label{pclassification-associate}
For $p(x)\in \C((x))$, set
$$\phi_{p(x)}(x,z)=e^{zp(x)\frac{d}{dx}}x\in \C((x))[[z]].$$
Then $\phi_{p(x)}(x,z)$ is an associate of $F_{\add}(x,y)$.  
On the other hand, every associate is of this form with $p(x)$ uniquely determined. 
\end{prop}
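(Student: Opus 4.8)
The plan is to regard $D=p(x)\frac{d}{dx}$ as a derivation of $\C((x))$ and to interpret $\phi_{p(x)}(x,z)=e^{zD}x$ as the formal flow of the associated vector field. First I would record that $e^{zD}$ is a well-defined $\C[[z]]$-linear operator $\C((x))\to\C((x))[[z]]$: its value $\sum_{k\ge 0}\frac{z^{k}}{k!}D^{k}h$ on any $h\in\C((x))$ has each coefficient of $z^{k}$ lying in $\C((x))$, so in particular $\phi_{p(x)}(x,z)=e^{zD}x\in\C((x))[[z]]$ with $\phi_{p(x)}(x,0)=x$. The technical heart of the argument is a \emph{substitution lemma}: because $D$ is a derivation, $e^{zD}$ is a $\C[[z]]$-algebra homomorphism, so for every Laurent series $h(x)=\sum_{n\ge N}a_{n}x^{n}$ one has $e^{zD}h=\sum_{n}a_{n}(e^{zD}x)^{n}=h(\phi_{p(x)}(x,z))$, the passage of the (possibly infinite) sum through the homomorphism being justified by the fact that $\phi_{p(x)}(x,z)=x+zp(x)+\cdots$ has leading term $x$ and is invertible in $\C((x))[[z]]$.

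Granting the substitution lemma, the ``associate'' direction is short. The condition $\phi_{p(x)}(x,0)=x$ is immediate. For the second condition I would use the semigroup identity $e^{yD}e^{zD}=e^{(y+z)D}$ (valid since $D$ commutes with itself) and compute
\begin{eqnarray*}
\phi_{p(x)}(x,y+z)=e^{(y+z)D}x=e^{yD}\big(e^{zD}x\big)=e^{yD}\phi_{p(x)}(x,z)=\phi_{p(x)}(\phi_{p(x)}(x,y),z),
\end{eqnarray*}
where the last step applies the substitution lemma coefficient-by-coefficient in $z$ to $h(x)=\phi_{p(x)}(x,z)\in\C((x))[[z]]$. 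Thus $\phi_{p(x)}$ satisfies both defining conditions of Definition \ref{d-associate}.

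For the converse, given an associate $\phi(x,z)$ I would set $p(x)=\frac{\partial}{\partial z}\big|_{z=0}\phi(x,z)$, which is just the coefficient of $z$ in $\phi$ and hence lies in $\C((x))$. Differentiating the functional equation $\phi(\phi(x,y),z)=\phi(x,y+z)$ with respect to $z$ and setting $z=0$ yields the first-order equation
\begin{eqnarray*}
\frac{\partial}{\partial y}\phi(x,y)=p(\phi(x,y)),\qquad \phi(x,0)=x.
\end{eqnarray*}
On the other hand, $\phi_{p(x)}$ satisfies the same equation, since $\frac{\partial}{\partial y}e^{yD}x=e^{yD}Dx=e^{yD}p(x)=p(\phi_{p(x)}(x,y))$ by the substitution lemma. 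Reading off the coefficient of $y^{k}$ shows that $F_{0}(x)=x$ together with the recursion determines every coefficient $F_{k}(x)$ of a solution $F(x,y)=\sum_{k}F_{k}(x)y^{k}$ uniquely; this formal uniqueness forces $\phi=\phi_{p(x)}$. Uniqueness of $p$ is then automatic, since any $p$ with $\phi=\phi_{p(x)}$ must equal $\frac{\partial}{\partial z}\big|_{z=0}\phi_{p(x)}(x,z)=p(x)$.

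The main obstacle is the substitution lemma together with the attendant convergence bookkeeping in $\C((x))[[z]]$: one must check that $e^{zD}$ genuinely lands in $\C((x))[[z]]$, that it is an algebra homomorphism there, and that substituting the non-polynomial series $\phi_{p(x)}(x,y)$ into a Laurent series produces a well-defined element of $\C((x))[[z]]$ --- this is exactly where the condition $\phi(x,0)=x$ (leading term $x$, hence invertibility) is used. Once this formal-flow machinery is in place, both directions reduce to the semigroup law and to uniqueness for the formal ODE, both of which are routine.
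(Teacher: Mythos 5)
Your proof is correct. The paper gives no proof of this proposition---it is quoted from \cite{Li-phi}---but your argument (the substitution lemma making $e^{zD}$ an algebra homomorphism with $e^{zD}h=h(e^{zD}x)$, the semigroup law $e^{yD}e^{zD}=e^{(y+z)D}$ for one direction, and uniqueness of the solution of the formal initial value problem $\partial_y\phi=p(\phi)$, $\phi(x,0)=x$ for the converse) is exactly the standard one used in that reference, with the convergence bookkeeping you flag being the only real work and your justification of it adequate.
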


For $n\in \Z$, set
\begin{eqnarray}
\phi_{n}(x,z)=e^{zx^{n+1}\frac{d}{dx}}\cdot x\in \C((x))[[z]].
\end{eqnarray}
(Note that  the centerless Virasoro algebra, namely the Witt algebra, 
is given by $L(n)=-x^{n+1}\frac{d}{dx}$ for $n\in \Z$.)
In particular, we have
\begin{eqnarray}
\   \    \    \     \    \     \   \   \phi_{-1}(x,z)=x+z=F(x,z),\   \   \   \phi_{0}(x,z)=xe^{z},\  \  \   \phi_{1}(x,z)=\frac{x}{1-zx},
\end{eqnarray}
where as a convention, $\frac{x}{1-zx}$ stands for the formal power series expansion in the nonnegative powers of $z$.


\begin{remark}\label{rA=A}
Let  $\phi(x,z)$ be an associate of $F_{\add}(x,y)$ and let 
$$A(x_1,x_2)\in \Hom (W,W((x_1,x_2)))$$ 
with $W$ a general vector space. We have
\begin{eqnarray}
\   \   \   \    A(\phi(x_2,z),x_2)=A(x_1,x_2)|_{x_1=\phi(x_2,z)}\in \left(\Hom (W,W((x_2)))\right)[[z]].
\end{eqnarray}
On the other hand, for $B(x,z)\in \left(\Hom (W,W((x)))\right)[[z]]$, we have
\begin{eqnarray}
\    \    \    \    B(\phi(x_2,-z),z)=B(x_1,z)|_{x_1=\phi(x_2,-z)}\in \left(\Hom (W,W((x_2)))\right)[[z]].
\end{eqnarray}
Furthermore, for $A(x_1,x_2)\in \Hom (W,W((x_1,x_2)))$, we have
\begin{eqnarray}
\   \   \   \   \left(A(x_1,x_2)|_{x_1=\phi(x_2,z)}\right)|_{x_2=\phi(x_1,-z)}=A(x_1,x_2),
\end{eqnarray}
noticing that
$$\phi(x_2,z)|_{x_2=\phi(x_1,-z)}=\phi(\phi(x_1,-z),z)=\phi(x_1,(-z)+z)=\phi(x_1,0)=x_1.$$
\end{remark}

Let $\phi(x,z)=e^{zp(x)\frac{d}{dx}}\cdot x$ with $p(x)\in \C[x,x^{-1}]$. Noticing that
$\phi(x,z)\in \C[x,x^{-1}][[z]]$, we have $\phi(x^{-1},z)\in \C[x,x^{-1}][[z]]$. As $\phi(x^{-1},0)=x^{-1}$, $\phi(x^{-1},z)$ is 
an invertible element of $\C((x))[[z]]$. Since
$$\phi(x^{-1},z)=e^{-zp(1/x)x^{2}\frac{d}{dx}}\cdot x^{-1},$$
$$1=e^{-zp(1/x)x^{2}\frac{d}{dx}}\cdot (x\cdot x^{-1})=\left(e^{-zp(1/x)x^{2}\frac{d}{dx}}\cdot x\right)
\left(e^{-zp(1/x)x^{2}\frac{d}{dx}}\cdot x^{-1}\right),$$
we get 
$$\frac{1}{\phi(x^{-1},z)}=e^{-zp(1/x)x^{2}\frac{d}{dx}}\cdot x,$$
which is also an associate of $F_{\add}(x,y)$.
Thus we have proved:

\begin{lemma}\label{lphistar}
Suppose $\phi(x,z)=e^{zp(x)\frac{d}{dx}}\cdot x$ with $p(x)\in \C[x,x^{-1}]$. Define 
\begin{eqnarray}
\phi^{*}(x,z)=\frac{1}{\phi(x^{-1},z)}\in \C((x))[[z]].
\end{eqnarray}
Then $\phi^{*}(x,z)$ is also an associate of $F_{\add}(x,y)$ and we have
\begin{eqnarray}
\phi^{*}(x,z)=e^{-zp(1/x)x^{2}\frac{d}{dx}}\cdot x.
\end{eqnarray}
\end{lemma}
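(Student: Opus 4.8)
The plan is to reduce the entire statement to the single explicit formula $\phi^{*}(x,z)=e^{-zp(1/x)x^{2}\frac{d}{dx}}\cdot x$, because once that is established the associate property is free: setting $q(x)=-p(1/x)x^{2}$, which lies in $\C[x,x^{-1}]\subset \C((x))$ since $p(x)\in \C[x,x^{-1}]$, the formula reads $\phi^{*}(x,z)=e^{zq(x)\frac{d}{dx}}\cdot x=\phi_{q(x)}(x,z)$, and this is an associate of $F_{\add}(x,y)$ by Proposition \ref{pclassification-associate}. Thus the whole content sits in the identity
$$\phi(x^{-1},z)=e^{-zp(1/x)x^{2}\frac{d}{dx}}\cdot x^{-1}\qquad(\star)$$
together with the observation that inverting $(\star)$ produces the asserted expression for $\phi^{*}$.

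My preferred route to $(\star)$ is uniqueness of solutions of a first-order formal differential equation in $\C((x))[[z]]$. First I would record that the flow $\phi(x,z)=e^{zp(x)\frac{d}{dx}}\cdot x$ is the unique element of $\C((x))[[z]]$ solving $\frac{\partial}{\partial z}\phi=p(\phi)$ with $\phi(x,0)=x$; this follows because $e^{zp(x)\frac{d}{dx}}$ is an algebra homomorphism commuting with $p(x)\frac{d}{dx}$, so $\frac{\partial}{\partial z}\phi=e^{zp(x)\frac{d}{dx}}(p(x))=p(\phi)$, and matching powers of $z$ shows the constant term determines the solution recursively. Substituting $x\mapsto x^{-1}$ then shows $\phi(x^{-1},z)$ solves $\frac{\partial}{\partial z}F=p(F)$ with $F(x,0)=x^{-1}$. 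On the other hand, writing $G(x,z)=e^{zq(x)\frac{d}{dx}}\cdot x$, the exponential being an algebra map makes the right-hand side of $(\star)$ equal to $G^{-1}$, and a direct computation $\frac{\partial}{\partial z}G^{-1}=-G^{-2}\frac{\partial G}{\partial z}=-G^{-2}q(G)=-G^{-2}\bigl(-p(1/G)G^{2}\bigr)=p(G^{-1})$ shows it too solves $\frac{\partial}{\partial z}F=p(F)$ with $F(x,0)=x^{-1}$. By uniqueness the two sides of $(\star)$ agree.

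With $(\star)$ in hand I would finish quickly. The operator $T=e^{-zp(1/x)x^{2}\frac{d}{dx}}$ is the exponential of a derivation of $\C((x))[[z]]$, hence an algebra automorphism fixing $1$. Applying $T$ to the identity $x\cdot x^{-1}=1$ gives $1=(T\cdot x)(T\cdot x^{-1})$, and since $T\cdot x^{-1}=\phi(x^{-1},z)$ by $(\star)$ we obtain $\phi^{*}(x,z)=\frac{1}{\phi(x^{-1},z)}=T\cdot x=e^{-zp(1/x)x^{2}\frac{d}{dx}}\cdot x$, as required.

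The hard part will be $(\star)$: one must be confident that these are genuine manipulations in $\C((x))[[z]]$ and not merely formal shadows of smooth-flow identities. The delicate points are that $p(x)\frac{d}{dx}$ and $q(x)\frac{d}{dx}$ truly act on $\C((x))$ with exponentials landing in $\C((x))[[z]]$ (guaranteed by $p\in\C[x,x^{-1}]$), that the substitution $x\mapsto x^{-1}$ is a well-defined algebra map commuting with $\frac{\partial}{\partial z}$, and above all that $\frac{\partial}{\partial z}F=p(F)$ has a unique solution for a prescribed (invertible) constant term — this uniqueness is exactly what forces the two expressions to coincide. An alternative proof of $(\star)$ runs through the covariance of the flow under the change of coordinate $u=x^{-1}$, for which $\frac{d}{du}=-x^{2}\frac{d}{dx}$ carries $p(u)\frac{d}{du}$ to $-p(1/x)x^{2}\frac{d}{dx}$; I would, however, prefer the ODE argument as the cleaner and more self-contained of the two.
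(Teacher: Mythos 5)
Your proof is correct and follows essentially the same route as the paper: both reduce everything to the identity $\phi(x^{-1},z)=e^{-zp(1/x)x^{2}\frac{d}{dx}}\cdot x^{-1}$, invert it using the fact that $e^{-zp(1/x)x^{2}\frac{d}{dx}}$ is an algebra automorphism applied to $x\cdot x^{-1}=1$, and then invoke Proposition \ref{pclassification-associate} for the associate property. The only difference is that the paper simply asserts that identity (as the covariance of the flow under $x\mapsto x^{-1}$, i.e.\ your ``alternative'' route), whereas you derive it carefully from uniqueness of solutions of $\frac{\partial}{\partial z}F=p(F)$ with prescribed constant term --- a sound and slightly more self-contained justification of the one step the paper leaves implicit.
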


It can be readily seen that $(\phi^{*})^{*}=\phi$ for $\phi(x,z)=e^{zp(x)\frac{d}{dx}}\cdot x$ with $p(x)\in \C[x,x^{-1}]$.
 
\begin{remark}\label{rstar-phi}
Let $p(x)=x^{n+1}$ with $n\in \Z$. We have  $-x^{2}p(1/x)=-x^{1-n}$. Then
\begin{eqnarray}
\phi_{n}^{*}(x,z)=\phi_{-n}(x,-z)\   \   \    \mbox{ for }n\in \Z.
\end{eqnarray}
In particular,  we have $\phi^{*}_{-1}(x,z)=\phi_{1}(x,-z)=\frac{x}{1+zx}$ and
$\phi_{0}^{*}(x,z)=\phi_{0}(x,-z)$. 
\end{remark}

Next, we recall  from  \cite{Li-nonlocal} (cf. \cite{EK}) the notion of weak quantum vertex algebra, which generalizes the notions of 
vertex algebra, vertex super-algebra, and vertex color-algebra (see \cite{xu1}, \cite{xu2}) in a certain natural way.
 
\begin{definition}\label{dwqva}
A {\em weak quantum vertex algebra} is a vector space $V$ equipped with a linear map
\begin{eqnarray*}
Y(\cdot,x): &V&\rightarrow (\End V)[[x,x^{-1}]],\\
&v&\mapsto Y(v,x)=\sum_{n\in \Z}v_{n}x^{-n-1}\  \ (\mbox{where }v_{n}\in \End V)
\end{eqnarray*}
and equipped with a distinguished vector ${\bf 1}\in V$, satisfying the conditions that 
$$Y(u,x)v\in V((x))\  \   \mbox{ for }u,v\in V,$$
$$Y({\bf 1},x)v=v,\  \  \  Y(u,x){\bf 1}\in V[[x]]\  \  \mbox{ and }\  \lim_{x\rightarrow 0}Y(u,x){\bf 1}=u,$$
 and that for $u,v\in V$, there exist (finitely many) 
 $$u^{(i)},v^{(i)}\in V,\ f_{i}(x)\in \C((x)) \  \  (i=1,\dots,r)$$
 such that
\begin{eqnarray}\label{eSjacobi}
 &&x_{0}^{-1}\delta\left(\frac{x_1-x_2}{x_0}\right)Y(u,x_1)Y(v,x_2)\nonumber\\
 &&\hspace{1cm}-x_{0}^{-1}\delta\left(\frac{x_2-x_1}{-x_0}\right)\sum_{i}f_{i}(x_2-x_1)Y(v^{(i)},x_2)Y(u^{(i)},x_1)\nonumber\\
 &=&x_{2}^{-1}\delta\left(\frac{x_1-x_0}{x_2}\right)Y(Y(u,x_0)v,x_2).
\end{eqnarray}
\end{definition}
 
By the standard formal variable techniques we immediately have (cf. \cite{Li-nonlocal}):

\begin{prop}\label{pSjacobi=Sloaclity}
 In Definition \ref{dwqva}, the $\S$-Jacobi identity (\ref{eSjacobi}) is equivalent to the property that 
 there exists a nonnegative integer $k$ such that
 \begin{eqnarray}\label{eS-locality}
 && (x_1-x_2)^{k}Y(u,x_1)Y(v,x_2)= (x_1-x_2)^{k}\sum_{i=1}^{r}f_{i}(x_2-x_1)Y(v^{(i)},x_2)Y(u^{(i)},x_1),\nonumber\\
  &&
 \end{eqnarray}
 \begin{eqnarray}
 \left( (x_1-x_2)^{k}Y(u,x_1)Y(v,x_2)\right)|_{x_1=x_2+x_0}=x_0^{k}Y(Y(u,x_0)v,x_2).
 \end{eqnarray}
 \end{prop}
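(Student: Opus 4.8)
The plan is to move between the two formulations by the classical delta-function calculus, the only new feature being the rational factors $f_i(x_2-x_1)$ in the braided ordering. The two facts I would lean on are the fundamental identity
\[
x_{0}^{-1}\delta\left(\frac{x_1-x_2}{x_0}\right)-x_{0}^{-1}\delta\left(\frac{x_2-x_1}{-x_0}\right)=x_{2}^{-1}\delta\left(\frac{x_1-x_0}{x_2}\right)=x_{1}^{-1}\delta\left(\frac{x_2+x_0}{x_1}\right),
\]
together with the substitution rule that lets one trade $x_1-x_2$ for $x_0$ (respectively substitute $x_1=x_2+x_0$) underneath the corresponding delta.

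For the forward implication I would first extract $\S$-locality. Choose $k$ so large that $u_{m}v=0$ for all $m\ge k$, which is legitimate because $Y(u,x_0)v\in V((x_0))$ is truncated from below and forces $x_0^{k}Y(Y(u,x_0)v,x_2)$ to have only nonnegative powers of $x_0$. Taking $\Res_{x_0}$ of $x_0^{k}$ times the $\S$-Jacobi identity, the right-hand side contributes nothing: its product of $x_2^{-1}\delta(\frac{x_1-x_0}{x_2})$, a series in nonnegative powers of $x_0$, with $x_0^{k}Y(Y(u,x_0)v,x_2)$ involves only nonnegative powers of $x_0$, hence no $x_0^{-1}$ term. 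Meanwhile $\Res_{x_0}x_0^{k-1}\delta(\frac{x_1-x_2}{x_0})=(x_1-x_2)^{k}$ and the analogous evaluation of the second delta (the two polynomial expansions of $(x_1-x_2)^{k}$ coincide, and $f_i(x_2-x_1)$ factors out of $\Res_{x_0}$) turn the left-hand side into exactly the $\S$-locality relation. To obtain associativity with the same $k$, I would take $\Res_{x_1}$ of $x_0^{k}$ times the $\S$-Jacobi identity: on the right $\Res_{x_1}x_2^{-1}\delta(\frac{x_1-x_0}{x_2})=1$ yields $x_0^{k}Y(Y(u,x_0)v,x_2)$, while on the left, after absorbing $x_0^{k}$ into the deltas as $(x_1-x_2)^{k}$ and using $\S$-locality to identify the braided ordering with $F(x_1,x_2):=(x_1-x_2)^{k}Y(u,x_1)Y(v,x_2)\in\Hom(W,W((x_1,x_2)))$, the fundamental identity collapses the two delta terms to $x_2^{-1}\delta(\frac{x_1-x_0}{x_2})F(x_1,x_2)$, whose $\Res_{x_1}$ equals $F(x_2+x_0,x_2)$. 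This is precisely the asserted associativity relation.

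For the converse I would reverse this recombination. Starting from $\S$-locality and associativity for a fixed $k$, I would rewrite each delta term of the $\S$-Jacobi left-hand side by the substitution rule, replacing $Y(u,x_1)Y(v,x_2)$ and $\sum_i f_i(x_2-x_1)Y(v^{(i)},x_2)Y(u^{(i)},x_1)$ by $x_0^{-k}F(x_1,x_2)$ underneath $x_0^{-1}\delta(\frac{x_1-x_2}{x_0})$ and $x_0^{-1}\delta(\frac{x_2-x_1}{-x_0})$ respectively, the second replacement being where $\S$-locality enters. The fundamental identity then merges the two into $x_0^{-k}x_2^{-1}\delta(\frac{x_1-x_0}{x_2})F(x_1,x_2)$; substituting $x_1=x_2+x_0$ under the surviving delta and invoking associativity to write $x_0^{-k}F(x_2+x_0,x_2)=Y(Y(u,x_0)v,x_2)$ recovers the $\S$-Jacobi right-hand side.

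The one place demanding genuine care, rather than routine bookkeeping, is the consistent treatment of the twists $f_i(x_2-x_1)\in\C((x_2-x_1))$. Each $f_i$ carries finitely many negative powers of $x_2-x_1$, so $k$ must be taken large enough to clear simultaneously the pole of $Y(u,x_1)Y(v,x_2)$ along $x_1=x_2$ and the poles introduced by the $f_i$; only then does $(x_1-x_2)^{k}\sum_i f_i(x_2-x_1)Y(v^{(i)},x_2)Y(u^{(i)},x_1)$ lie in $\Hom(W,W((x_1,x_2)))$ and agree with $F$. Moreover the substitution rule is valid only when $f_i(x_2-x_1)$ is expanded in the same domain as the accompanying $x_0^{-1}\delta(\frac{x_2-x_1}{-x_0})$, so one must check throughout that the braided term stays paired with that delta. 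Once this matching is respected, a single $k$ serves both the locality and the associativity relations and the equivalence is immediate.
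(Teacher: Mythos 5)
Your argument is correct and is precisely the standard residue/delta-function computation that the paper itself invokes without writing out ("by the standard formal variable techniques", deferring to \cite{Li-nonlocal}): extract $\S$-locality via $\Res_{x_0}x_0^{k}$, extract weak associativity via $\Res_{x_1}$ after collapsing the two delta terms, and reverse the recombination for the converse. Your closing remark on enlarging $k$ to clear the poles of the $f_i(x_2-x_1)$ and on keeping each expansion paired with its delta function is exactly the only point where the quantum case differs from the classical one, so nothing needs to be added.
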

 
Let $V$ be a weak quantum vertex algebra. Denote by $\D$ the linear operator defined by
 \begin{eqnarray}
 \D (v)=\lim_{x\rightarrow 0}\frac{d}{dx} Y(v,x){\bf 1}\   \   \  \mbox{ for }v\in V.
 \end{eqnarray}
 Then (see \cite{Li-nonlocal})
 \begin{eqnarray}
 [\D, Y(v,x)]=Y(\D v,x)=\frac{d}{dx}Y(v,x).
 \end{eqnarray}
 Furthermore, for $u,v\in V$, we have (see \cite{EK})
 \begin{eqnarray}\label{eskew-symmetry}
 Y(u,x)v=e^{x\D}\sum_{i=1}^{r}f_{i}(-x)Y(v^{(i)},-x)u^{(i)},
 \end{eqnarray}
where $u^{(i)},v^{(i)}\in V,\ f_{i}(x)\in \C((x))$ such that (\ref{eS-locality}) holds.
 
\begin{remark}\label{rdual-qva}
 Let $V$ be a weak quantum vertex algebra. Define a linear map
 \begin{eqnarray}
 Y^{o}(\cdot,x):\ V\rightarrow (\End V)[[x,x^{-1}]]
 \end{eqnarray}
 by
 \begin{eqnarray}
 Y^{o}(u,x)v=e^{x\D}Y(v,-x)u\   \   \    \mbox{ for }u,v\in V.
 \end{eqnarray}
Then $(V,Y^{o},{\bf 1})$ is a weak quantum vertex algebra (see \cite{sun}; cf. \cite{BK}). Notice that
if $V$ is a vertex algebra, then $(V,Y^{o},{\bf 1})$ coincides with $(V,Y,{\bf 1})$ as 
$$Y(u,x)v=e^{x\D}Y(v,-x)u=Y^{o}(u,x)v\   \    \  \mbox{ for all }u,v\in V.$$
\end{remark}
 
Note that the last condition in Definition \ref{dwqva} amounts to that there is a linear map
 $$\S(x): \  V\otimes V\rightarrow V\otimes V\otimes \C((x))$$
 such that for $u,v\in V$, (\ref{eSjacobi}) holds with
 $$\S(x)(v\otimes u)=\sum_{i=1}^{r}v^{(i)}\otimes u^{(i)}\otimes f_{i}(x).$$
By {\em a weak quantum vertex algebra with a constant map $\S$} we mean that there is a constant linear map $\S$
such that (\ref{eSjacobi}) holds. Note that here $\S$ does not depend on $x$, but it could depend on vectors $u,v\in V$.

The following notion was introduced in  \cite{Li-phi}:
 
\begin{definition}\label{dphi-module}
Let $V$ be a weak quantum vertex algebra, $\phi$ an associate of $F_{\add}(x,y)$. 
A {\em $\phi$-coordinated $V$-module} is a vector space $W$ equipped with a linear map
\begin{eqnarray*}
Y_{W}(\cdot,x):&V& \rightarrow  (\End W)[[x,x^{-1}]],\\
&v&\mapsto Y_{W}(v,x)
\end{eqnarray*}
satisfying the conditions that 
$$Y_{W}(v,x)w\in W((x))\   \   \   \mbox{ for }v\in V,\ w\in W,  $$
$Y_{W}({\bf 1},x)=1_{W}$, and that for any $u,v\in V$, there exists $k\in \N$ such that
$$(x_1-x_2)^{k}Y_{W}(u,x_1)Y_{W}(v,x_2)\in \Hom(W,W((x_1,x_2))),$$
\begin{eqnarray}\label{ephi-assoc}
&&\left((x_1-x_2)^{k}Y_{W}(u,x_1)Y_{W}(v,x_2)\right)|_{x_1=\phi(x_2,z)}=(\phi(x_2,z)-x_2)^{k}Y_{W}(Y(u,z)v,x_2).\nonumber\\
&&
\end{eqnarray}
\end{definition}

Note that  in Definition \ref{dphi-module}, the first part of the second condition  
guarantees the existence of the substitution on the left-hand side of (\ref{ephi-assoc}).
The following was proved in \cite{Li-phi}:

\begin{lemma}\label{lD-phimodule}
Let $V$ be a weak quantum vertex algebra and let $(W,Y_{W})$ be a $\phi$-coordinated $V$-module. Then
\begin{eqnarray}
Y_{W}(e^{z\D} u,x)= Y_{W}(u,\phi(x,z))\    \    \  \mbox{ for }u\in V.
\end{eqnarray}
\end{lemma}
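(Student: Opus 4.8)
The plan is to specialize the $\phi$-coordinated associativity (\ref{ephi-assoc}) to the case $v=\mathbf 1$, where every ingredient collapses to exactly the desired identity. The only auxiliary fact needed is the creation property $Y(u,z)\mathbf 1=e^{z\D}u$ inside $V$, which is what converts the internal vertex operator $Y(u,z)\mathbf 1$ on the right-hand side of (\ref{ephi-assoc}) into the operator $e^{z\D}u$ appearing in the statement.

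First I would establish $Y(u,z)\mathbf 1=e^{z\D}u$. From $Y(\mathbf 1,x)v=v$ one gets $\D\mathbf 1=0$, and applying $[\D,Y(u,x)]=\frac{d}{dx}Y(u,x)$ to $\mathbf 1$ yields the differential equation $\frac{d}{dz}Y(u,z)\mathbf 1=\D\,Y(u,z)\mathbf 1$ in $V[[z]]$. Combined with the initial condition $\lim_{z\to 0}Y(u,z)\mathbf 1=u$ coming from Definition \ref{dwqva}, this forces $Y(u,z)\mathbf 1=e^{z\D}u$.

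Next I would set $v=\mathbf 1$ in (\ref{ephi-assoc}). Since $Y_{W}(\mathbf 1,x_2)=1_{W}$, the product $Y_{W}(u,x_1)Y_{W}(\mathbf 1,x_2)=Y_{W}(u,x_1)$ already lies in $\Hom(W,W((x_1,x_2)))$ with no regularizing factor, so one may take $k=0$. With $k=0$ the associativity reads $Y_{W}(u,x_1)|_{x_1=\phi(x_2,z)}=Y_{W}(Y(u,z)\mathbf 1,x_2)$; using the creation property from the previous step, the right-hand side equals $Y_{W}(e^{z\D}u,x_2)$, while the left-hand side is $Y_{W}(u,\phi(x_2,z))$ by the meaning of the substitution. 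Renaming $x_2$ as $x$ then gives the claim.

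The one point demanding care, rather than a genuine obstacle, is legitimizing the substitution $x_1=\phi(x_2,z)$ at $k=0$. This requires that $\phi(x_2,z)$ be invertible in $\C((x_2))[[z]]$, which holds because $\phi(x_2,0)=x_2$, so that each negative power $\phi(x_2,z)^{-n-1}$ is well defined and the lower-truncated series $Y_{W}(u,x_1)w\in W((x_1))$ substitutes to a well-defined element of $W((x_2))[[z]]$. One must also confirm that the associativity may indeed be invoked with the minimal admissible value $k=0$ in this specialization, which is exactly what the triviality of $Y_{W}(\mathbf 1,x_2)$ provides.
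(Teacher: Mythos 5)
The paper does not actually reprove this lemma; it cites \cite{Li-phi}, and the argument there is precisely the one you propose: set $v=\mathbf{1}$ in the associativity (\ref{ephi-assoc}) and use the creation property $Y(u,z)\mathbf{1}=e^{z\D}u$, which you derive correctly from $\D\mathbf{1}=0$ together with $[\D,Y(u,x)]=\frac{d}{dx}Y(u,x)$ and the initial condition $\lim_{x\to 0}Y(u,x)\mathbf{1}=u$. So your overall route is the right one and coincides with the source.

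The one step you should tighten is the reduction to $k=0$. Definition \ref{dphi-module} only asserts that (\ref{ephi-assoc}) holds for \emph{some} $k$; the observation that $Y_{W}(u,x_1)Y_{W}(\mathbf{1},x_2)=Y_{W}(u,x_1)$ already lies in $\Hom(W,W((x_1,x_2)))$ shows the first condition holds with $k=0$, but it does not by itself show that the associativity identity holds with $k=0$, which is what your argument silently uses (your closing sentence appeals to the triviality of $Y_{W}(\mathbf{1},x_2)$ for this, and that is not enough). The clean fix is to keep the $k$ supplied by the definition: the substitution $x_1=\phi(x_2,z)$ is multiplicative against the polynomial factor $(x_1-x_2)^{k}$, so the left-hand side becomes $(\phi(x_2,z)-x_2)^{k}Y_{W}(u,\phi(x_2,z))$ and one obtains
\[
(\phi(x_2,z)-x_2)^{k}\,Y_{W}(u,\phi(x_2,z))=(\phi(x_2,z)-x_2)^{k}\,Y_{W}(e^{z\D}u,x_2),
\]
from which the factor $(\phi(x_2,z)-x_2)^{k}$ cancels because $\C((x_2))[[z]]$ is an integral domain acting without torsion on $W((x_2))[[z]]$. (This requires $\phi(x,z)\neq x$, i.e.\ $p(x)\neq 0$, the only associate for which that factor vanishes; the degenerate case should be excluded or treated separately.) With that replacement your proof is complete; your remarks on the well-definedness of the substitution of $\phi(x_2,z)$ into $Y_{W}(u,x_1)$ are correct and are exactly the content of Remark \ref{rA=A}.
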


\begin{remark}\label{rmodule-Jacobi}
In case $\phi(x,z)=x+z$ (the formal group itself), the notion of $\phi$-coordinated $V$-module is equivalent to 
the notion of $V$-module
defined in \cite{Li-nonlocal}. If $(W,Y_{W})$ is a $V$-module, then for $u,v\in V$,
\begin{eqnarray}\label{eSjacobi-module}
 &&x_{0}^{-1}\delta\left(\frac{x_1-x_2}{x_0}\right)Y_{W}(u,x_1)Y_{W}(v,x_2)\nonumber\\
 &&\hspace{1cm}-x_{0}^{-1}\delta\left(\frac{x_2-x_1}{-x_0}\right)\sum_{i}f_{i}(x_2-x_1)Y_{W}(v^{(i)},x_2)Y_{W}(u^{(i)},x_1)\nonumber\\
 &=&x_{2}^{-1}\delta\left(\frac{x_1-x_0}{x_2}\right)Y_{W}(Y(u,x_0)v,x_2),
\end{eqnarray}
where $u^{(i)},v^{(i)},f_{i}(x)$ are given as in (\ref{eSjacobi}). It was proved therein that
\begin{eqnarray}
Y_{W}(\D v,x)=\frac{d}{dx}Y_{W}(v,x)  \   \   \   \mbox{ for }v\in V.
\end{eqnarray}
\end{remark}

Here, we have:

\begin{prop}\label{pphi-property}
Let $V$ be a weak quantum vertex algebra with a constant map $\S$, let $\phi(x,z)$ be an associate of $F_{\add}(x,y)$, and
let $(W,Y_{W})$ be a $\phi$-coordinated $V$-module. Then
for any $u,v\in V$, there exists a nonnegative integer $k$ such that
\begin{eqnarray}
&&(x_1-x_2)^{k}Y_{W}(u,x_1)Y_{W}(v,x_2)=(x_1-x_2)^{k}\sum_{i=1}^{r}Y_{W}(v^{(i)},x_2)Y_{W}(u^{(i)},x_1),\nonumber\\
&&
\end{eqnarray}
where $u^{(i)},v^{(i)}\in V$ such that $Y(u,x)v=\sum_{i=1}^{r}e^{x\D}Y(v^{(i)},-x)u^{(i)}$.
\end{prop}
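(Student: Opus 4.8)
The plan is to establish the $\mathcal{S}$-locality relation in Proposition~\ref{pphi-property} by exploiting the associativity property~(\ref{ephi-assoc}) twice and comparing the two resulting expressions via the substitution symmetry recorded in Remark~\ref{rA=A}. The starting observation is that since $\S$ is constant, the skew-symmetry formula~(\ref{eskew-symmetry}) takes the particularly clean form $Y(u,x)v=\sum_{i}e^{x\D}Y(v^{(i)},-x)u^{(i)}$, which is exactly the hypothesis stated in the proposition. First I would fix $u,v\in V$ and choose, using Definition~\ref{dphi-module}, a nonnegative integer $k$ large enough that both $(x_1-x_2)^{k}Y_{W}(u,x_1)Y_{W}(v,x_2)$ and the analogous product with the roles interchanged (for each pair $u^{(i)},v^{(i)}$) lie in $\Hom(W,W((x_1,x_2)))$; since there are finitely many terms, a single $k$ works for all of them simultaneously.

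Next I would apply the associativity~(\ref{ephi-assoc}) to rewrite the left-hand product after the substitution $x_1=\phi(x_2,z)$, obtaining
\begin{eqnarray*}
\left((x_1-x_2)^{k}Y_{W}(u,x_1)Y_{W}(v,x_2)\right)|_{x_1=\phi(x_2,z)}=(\phi(x_2,z)-x_2)^{k}Y_{W}(Y(u,z)v,x_2).
\end{eqnarray*}
Using the skew-symmetry expression for $Y(u,z)v$ together with Lemma~\ref{lD-phimodule}, which converts the operator $e^{z\D}$ into a shift of the formal variable via $Y_{W}(e^{z\D}w,x)=Y_{W}(w,\phi(x,z))$, the right-hand side becomes a sum over $i$ of terms of the form $(\phi(x_2,z)-x_2)^{k}Y_{W}(Y(v^{(i)},-z)u^{(i)},\phi(x_2,z))$. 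Applying~(\ref{ephi-assoc}) a second time — now to the product $Y_{W}(v^{(i)},x_1)Y_{W}(u^{(i)},x_2)$ with the substitution $x_1=\phi(x_2,-z)$ and then shifting the base point by $\phi(x_2,z)$ — should reassemble this into $\sum_{i}\left((x_1-x_2)^{k}Y_{W}(v^{(i)},x_2)Y_{W}(u^{(i)},x_1)\right)$ evaluated at the appropriate substitution. The point is that both the original product and the interchanged product, after substitution and clearing the common factor $(\phi(x_2,z)-x_2)^{k}$, represent the \emph{same} element of $\left(\Hom(W,W((x_2)))\right)[[z]]$.

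The final step is to transport this identity of substituted expressions back to an identity in the original variables $x_1,x_2$. This is where Remark~\ref{rA=A} is decisive: the substitution $x_1\mapsto\phi(x_2,z)$ is invertible, with inverse realized by $x_2\mapsto\phi(x_1,-z)$, because $\phi(\phi(x_1,-z),z)=\phi(x_1,0)=x_1$. Since both $(x_1-x_2)^{k}Y_{W}(u,x_1)Y_{W}(v,x_2)$ and $(x_1-x_2)^{k}\sum_{i}Y_{W}(v^{(i)},x_2)Y_{W}(u^{(i)},x_1)$ genuinely live in $\Hom(W,W((x_1,x_2)))$, the inversion formula of Remark~\ref{rA=A} guarantees that equality after the substitution $x_1=\phi(x_2,z)$ forces equality of the original two-variable series. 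I expect the main obstacle to be the bookkeeping in the second application of associativity: one must carefully track how the two substitutions $x_1=\phi(x_2,z)$ and the inner shift interact, and verify that the common prefactors $(\phi(x_2,z)-x_2)^{k}$ cancel cleanly so that the remaining factor matches the $(x_1-x_2)^{k}$ that reappears after inverting the substitution. Ensuring that a single $k$ suffices for every term and that no spurious powers of $z$ are introduced by the nested shifts is the delicate part; everything else is a formal manipulation licensed by the lemmas already established.
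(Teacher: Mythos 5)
Your proposal is correct and follows essentially the same route as the paper: constant $\S$ gives the clean skew-symmetry $Y(u,x)v=\sum_i e^{x\D}Y(v^{(i)},-x)u^{(i)}$, associativity (\ref{ephi-assoc}) is applied once to $Y_W(u,x_1)Y_W(v,x_2)$ and once to the reversed products, Lemma \ref{lD-phimodule} converts $e^{-z\D}$ into the shift $\phi(\cdot,-z)$ so the two associativity outputs match, and the invertibility of the substitution from Remark \ref{rA=A} transports the equality back to the two-variable series. The only (cosmetic) difference is that the paper substitutes $x_2=\phi(x_1,-z)$ directly into the reversed product rather than substituting $x_1=\phi(x_2,-z)$ and then re-shifting the base point, which avoids the nested-substitution bookkeeping you flag as delicate.
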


\begin{proof} Let $u,v\in V$. Since $\S$ is constant, we have $Y(u,x)v=\sum_{i=1}^{r}e^{x\D}Y(v^{(i)},-x)u^{(i)}$ 
for some $u^{(i)},v^{(i)}\in V$.
By definition, there exists a nonnegative integer $k$ such that
\begin{eqnarray*}
&&(x_1-x_2)^{k}Y_{W}(u,x_1)Y_{W}(v,x_2)\in \Hom (W,W((x_1,x_2))),\\
&&\left((x_1-x_2)^{k}Y_{W}(u,x_1)Y_{W}(v,x_2)\right)|_{x_1=\phi(x_2,z)}=(\phi(x_2,z)-x_2)^{k}Y_{W}(Y(u,z)v,x_2),
\end{eqnarray*}
and such that 
\begin{eqnarray*}
&&(x_1-x_2)^{k}Y_{W}(v^{(i)},x_2)Y_{W}(u^{(i)},x_1)\in \Hom (W,W((x_1,x_2))),\\
&&\left((x_1-x_2)^{k}Y_{W}(v^{(i)},x_2)Y_{W}(u^{(i)},x_1)\right)|_{x_2=\phi(x_1,-z)}\\
&=&(x_1-\phi(x_1,-z))^{k}Y_{W}(Y(v^{(i)},-z)u^{(i)},x_1)
\end{eqnarray*}
for $i=1,\dots,r$.
Noticing that 
\begin{eqnarray*}
\sum_{i=1}^{r}Y_{W}(Y(v^{(i)},-z)u^{(i)},x_1)=Y_{W}(e^{-z\D}Y(u,z)v,x_1)
=Y_{W}(Y(u,z)v,\phi(x_1,-z)), 
\end{eqnarray*}
we get 
\begin{eqnarray*}
&&\left((x_1-x_2)^{k}\sum_{i=1}^{r}Y_{W}(v^{(i)},x_2)Y_{W}(u^{(i)},x_1)\right)|_{x_2=\phi(x_1,-z)}\\
&=&(x_1-\phi(x_1,-z))^{k}Y_{W}(Y(u,z)v,\phi(x_1,-z))\\
&=&\left((\phi(x_2,z)-x_2)^{k}Y_{W}(Y(u,z)v,x_2)\right)|_{x_2=\phi(x_1,-z)}\\
&=&\left(\left((x_1-x_2)^{k}Y_{W}(u,x_1)Y_{W}(v,x_2)\right)|_{x_1=\phi(x_2,z)}\right)|_{x_2=\phi(x_1,-z)}\\
&=&(x_1-x_2)^{k}Y_{W}(u,x_1)Y_{W}(v,x_2),
\end{eqnarray*}
where we are using the fact that $\phi(\phi(x_1,-z),z)=x_1$ (recalling Remark \ref{rA=A}).
From this we obtain weak commutativity relation
\begin{eqnarray*}
(x_1-x_2)^{k}Y_{W}(u,x_1)Y_{W}(v,x_2)=(x_1-x_2)^{k}\sum_{i=1}^{r}Y_{W}(v^{(i)},x_2)Y_{W}(u^{(i)},x_1),
\end{eqnarray*}
as desired.
\end{proof}

Next, we show that for some associate $\phi$,  the category of $\phi$-coordinated $V$-modules 
is actually  isomorphic to the category of $V$-modules canonically. Note that for any $f(x)\in x\C[[x]]$ with $f'(0)\ne 0$,
$f(x)$ is invertible with respect to the composition, i.e., there exists $f^{-1}(x)\in x\C[[x]]$ such that $f(f^{-1}(x))=x=f^{-1}(f(x))$.

\begin{lemma}\label{lassociate-f}
Let $\phi(x,z)$ be an associate of $F_{\add}(x,y)$ and let $f(x)\in x\C[[x]]$ with $f'(0)\ne 0$. 
Define
$$\phi_{f}(x,z)=f^{-1}(\phi(f(x),z)).$$
Then $\phi_{f}(x,z)$ is an associate of $F_{\add}(x,y)$.
\end{lemma}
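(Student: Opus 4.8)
The plan is to verify directly that $\phi_{f}(x,z)=f^{-1}(\phi(f(x),z))$ meets the two requirements of Definition~\ref{d-associate}, the only real work being to make sense of the nested substitutions so that $\phi_{f}(x,z)$ is a well-defined element of $\C((x))[[z]]$. Writing $\phi(x,z)=\sum_{n\ge 0}\phi_{n}(x)z^{n}$ with $\phi_{n}(x)\in\C((x))$ and $\phi_{0}(x)=x$, I first note that because $f(x)\in x\C[[x]]$ has a simple zero at the origin, $f(x)^{-1}\in\C((x))$, and therefore $h(x)\mapsto h(f(x))$ is a well-defined (continuous) substitution on $\C((x))$. Applying it coefficient-wise gives $\phi(f(x),z)=\sum_{n\ge 0}\phi_{n}(f(x))z^{n}\in\C((x))[[z]]$, whose constant term in $z$ is $\phi_{0}(f(x))=f(x)$.

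The outer substitution into $f^{-1}$ is the step that needs care, and I regard it as the main obstacle: the positive-$z$ coefficients of $\phi(f(x),z)$ may have poles in $x$, so one cannot naively compose $f^{-1}$ with an arbitrary Laurent series. To handle this I would write $\phi(f(x),z)=f(x)+g$ with $g=g(x,z)\in z\,\C((x))[[z]]$ and expand by Taylor's formula,
$$f^{-1}\bigl(f(x)+g\bigr)=\sum_{m\ge 0}\frac{1}{m!}\,(f^{-1})^{(m)}(f(x))\,g^{m}.$$
Since $g$ has positive order in $z$, the term $g^{m}$ has $z$-order at least $m$, so for each power of $z$ only finitely many $m$ contribute; moreover each $(f^{-1})^{(m)}(f(x))$ lies in $\C[[x]]$ (a power series in $y=f(x)$ evaluated at $f(x)\in x\C[[x]]$) and each $g^{m}$ lies in $\C((x))[[z]]$. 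Hence the sum converges $z$-adically to an element of $\C((x))[[z]]$, and its constant term in $z$ is $f^{-1}(f(x))=x$. This simultaneously shows $\phi_{f}(x,z)\in\C((x))[[z]]$ and gives the first condition $\phi_{f}(x,0)=x$.

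For the cocycle condition $\phi_{f}(\phi_{f}(x,y),z)=\phi_{f}(x,y+z)$, the key observation is that $f\circ f^{-1}=\mathrm{id}$ yields $f(\phi_{f}(x,y))=\phi(f(x),y)$. Using this together with the associate property of $\phi$ I would compute
$$\phi_{f}(\phi_{f}(x,y),z)=f^{-1}\bigl(\phi(f(\phi_{f}(x,y)),z)\bigr)=f^{-1}\bigl(\phi(\phi(f(x),y),z)\bigr)=f^{-1}\bigl(\phi(f(x),y+z)\bigr)=\phi_{f}(x,y+z),$$
where the third equality is the identity $\phi(\phi(x,y),z)=\phi(x,y+z)$ applied with first argument $f(x)$. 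The one point to justify here is that substituting $x\mapsto f(x)$ into that identity---valid in $\C((x))[[y,z]]$---is legitimate and commutes with the nested composition; this follows from the fact that $h(x)\mapsto h(f(x))$ is a ring substitution that is associative with respect to composition of formal series, so it preserves the identity. Note also that the inner series $\phi_{f}(x,y)$ has constant-in-$y$ term $x$ and hence a simple zero, so the outer substitution $\phi_{f}(\phi_{f}(x,y),z)$ is of exactly the type already justified above.

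As an alternative route one could invoke Proposition~\ref{pclassification-associate}: if $\phi(x,z)=e^{zp(x)\frac{d}{dx}}x$, then interpreting $\phi$ as the time-$z$ flow of the vector field $p(x)\frac{d}{dx}$ and changing coordinates by $f$ shows $\phi_{f}(x,z)=e^{zq(x)\frac{d}{dx}}x$ with $q(x)=p(f(x))/f'(x)\in\C((x))$, whence $\phi_{f}$ is an associate. I prefer the direct verification above, however, since it is self-contained and does not rely on the classification.
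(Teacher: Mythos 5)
Your proposal is correct and follows essentially the same route as the paper's proof: a direct verification of $\phi_{f}(x,0)=x$ and of the cocycle identity via $f(\phi_{f}(x,y))=\phi(f(x),y)$ combined with $\phi(\phi(f(x),y),z)=\phi(f(x),y+z)$. The only difference is that you spell out the $z$-adic convergence justifying the outer substitution into $f^{-1}$, which the paper dispatches with the brief observation that $f^{-1}(z)\in z\C[[z]]$ and $f(x)^{n}\in x^{n}\C[[x]]$.
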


\begin{proof} It is straightforward: As $f(x)\in x\C[[x]]$ with $f'(0)\ne 0$, we have $f^{-1}(z)\in z\C[[z]]$
and $f(x)^{n}\in x^{n}\C[[x]]$ for $n\in \Z$. Thus $\phi(f(x),z)\in \C((x))[[z]]$ and
then $f^{-1}(\phi(f(x),z))\in \C((x))[[z]].$ 
Furthermore, we have
\begin{eqnarray*}
&&\phi_{f}(x,0)=f^{-1}(\phi(f(x),0))=f^{-1}(f(x))=x,\\   
&&\phi_{f}(\phi_{f}(x,y),z)
=f^{-1}(\phi(f(\phi_{f}(x,y)),z)\\
&&\   \   \   \   =f^{-1}(\phi(\phi (f(x),y),z))
=f^{-1}(\phi(f(x),y+z))=\phi_{f}(x,y+z).
\end{eqnarray*}
This proves that $\phi_{f}(x,z)$ is an associate of $F_{\add}(x,y)$.
\end{proof}

\begin{prop}\label{pf-coordinate}
Let $V$ be a weak quantum vertex algebra, let $\phi(x,z)$ be an associate of $F_{\add}(x,y)$, and let
$f(x)\in x\C[[x]]$ with $f'(0)\ne 0$.   Let $(W,Y_{W})$ be a $\phi$-coordinated $V$-module.
For $v\in V$, set
\begin{eqnarray}
Y_{W}^{f}(v,x)=Y_{W}(v,f(x)).
\end{eqnarray}
Then $(W,Y_{W}^{f})$ carries the structure of a $\phi_{f}$-coordinated $V$-module 
where $\phi_{f}(x,z)$ is defined as in Lemma \ref{lassociate-f}. 
\end{prop}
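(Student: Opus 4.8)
The plan is to verify directly that the pair $(W,Y_{W}^{f})$ satisfies the three conditions of Definition \ref{dphi-module} relative to $\phi_{f}$; this is legitimate since $\phi_{f}$ is an associate of $F_{\add}(x,y)$ by Lemma \ref{lassociate-f}. The first two conditions are essentially formal. Since $f(x)\in x\C[[x]]$ with $f'(0)\ne 0$, one has $f(x)^{n}\in x^{n}\C[[x]]$ for every $n\in \Z$, so the substitution $x\mapsto f(x)$ carries $W((x))$ into $W((x))$ and fixes the identity operator; hence $Y_{W}^{f}(v,x)w=Y_{W}(v,f(x))w\in W((x))$ and $Y_{W}^{f}({\bf 1},x)=Y_{W}({\bf 1},f(x))=1_{W}$. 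The genuine content is the $\phi_{f}$-associativity (\ref{ephi-assoc}), where I would concentrate the work.

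The key device is to transport the given $\phi$-associativity through the change of variable $x\mapsto f(x)$. Fix $u,v\in V$ and choose $k\in \N$ so that $B(x_1,x_2):=(x_1-x_2)^{k}Y_{W}(u,x_1)Y_{W}(v,x_2)\in \Hom(W,W((x_1,x_2)))$ and $B(x_1,x_2)|_{x_1=\phi(x_2,z)}=(\phi(x_2,z)-x_2)^{k}Y_{W}(Y(u,z)v,x_2)$. Next I would record the factorization $f(x_1)-f(x_2)=(x_1-x_2)g(x_1,x_2)$, where $g(x_1,x_2)=\sum_{n\ge 1}a_{n}(x_1^{n-1}+\cdots+x_2^{n-1})\in \C[[x_1,x_2]]$ when $f(x)=\sum_{n\ge 1}a_{n}x^{n}$. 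Since $g(0,0)=a_{1}=f'(0)\ne 0$, $g$ is a unit in $\C[[x_1,x_2]]$, and $g(x,x)=f'(x)$. Substituting $x_i\mapsto f(x_i)$ into $B$ (permissible because $f(x_i)\in x_i\C[[x_i]]$, so this lands again in $\Hom(W,W((x_1,x_2)))$) and using $(f(x_1)-f(x_2))^{k}=(x_1-x_2)^{k}g(x_1,x_2)^{k}$, I obtain
$$(x_1-x_2)^{k}Y_{W}^{f}(u,x_1)Y_{W}^{f}(v,x_2)=g(x_1,x_2)^{-k}B(f(x_1),f(x_2))\in \Hom(W,W((x_1,x_2))),$$
which settles the required membership and furnishes the object to be evaluated.

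Finally I would substitute $x_1=\phi_{f}(x_2,z)$, which is legitimate by Remark \ref{rA=A}. Two facts drive the computation: first, $f(\phi_{f}(x_2,z))=\phi(f(x_2),z)$ directly from the definition of $\phi_{f}$; second, the identity $(x_1-x_2)^{k}g(x_1,x_2)^{k}=(f(x_1)-f(x_2))^{k}$ evaluated at $x_1=\phi_{f}(x_2,z)$ reads $(\phi_{f}(x_2,z)-x_2)^{k}g(\phi_{f}(x_2,z),x_2)^{k}=(\phi(f(x_2),z)-f(x_2))^{k}$. By the first fact, $B(f(x_1),f(x_2))|_{x_1=\phi_{f}(x_2,z)}=B(\phi(f(x_2),z),f(x_2))$, and applying the given $\phi$-associativity with $x_2$ replaced by $f(x_2)$ turns this into $(\phi(f(x_2),z)-f(x_2))^{k}Y_{W}(Y(u,z)v,f(x_2))$. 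Multiplying by $g(\phi_{f}(x_2,z),x_2)^{-k}$ and cancelling through the second fact gives exactly $(\phi_{f}(x_2,z)-x_2)^{k}Y_{W}^{f}(Y(u,z)v,x_2)$, which is (\ref{ephi-assoc}) for $\phi_{f}$.

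The main obstacle is purely a matter of keeping the formal substitutions legitimate. The factor $g(x_1,x_2)^{-k}$ must be treated as a genuine element of $\C[[x_1,x_2]]$, so that its substitution and the product rule for substitutions are valid; and one must resist dividing by $\phi(f(x_2),z)-f(x_2)$, which is $O(z)$ and hence not invertible in $\C((x_2))[[z]]$. The clean way around this, used above, is never to invert that difference directly but to cancel it through the multiplicative identity $(x_1-x_2)^{k}g^{k}=(f(x_1)-f(x_2))^{k}$; here $g(\phi_{f}(x_2,z),x_2)$ is itself a unit in $\C((x_2))[[z]]$, since its value at $z=0$ is $g(x_2,x_2)=f'(x_2)$, which is invertible.
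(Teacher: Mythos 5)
Your proposal is correct and follows essentially the same route as the paper's own proof: check the formal conditions, transport the $\phi$-associativity through the substitution $x_i\mapsto f(x_i)$, factor $f(x_1)-f(x_2)=(x_1-x_2)g(x_1,x_2)$ with $g$ a unit, and cancel $g(\phi_{f}(x_2,z),x_2)^{k}$ rather than dividing by the non-invertible difference. The only cosmetic difference is that you move $g^{-k}$ to the right-hand side and make the invertibility of $g(\phi_{f}(x_2,z),x_2)$ in $\C((x_2))[[z]]$ explicit, whereas the paper multiplies both sides by $g^{k}$ and leaves that cancellation implicit.
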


\begin{proof} From definition we have $Y_{W}^{f}({\bf 1},x)=Y_{W}({\bf 1},f(x))=1_{W}$ and
$$Y_{W}^{f}(v,x)w=Y_{W}(v,f(x))w\in W((x))\   \   \   \mbox{ for }v\in V,\ w\in W.$$
Let $u,v\in V$. There exists a nonnegative integer $k$ such that
$$(z_1-z_2)^{k}Y_{W}(u,z_1)Y_{W}(v,z_2)\in \Hom (W,W((z_1,z_2)))$$
and
$$\left((z_1-z_2)^{k}Y_{W}(u,z_1)Y_{W}(v,z_2)\right)|_{z_1=\phi(z_2,x_0)}=(\phi(z_2,x_0)-z_2)^{k}Y_{W}(Y(u,x_0)v,z_2).$$
Then
$$(f(x_1)-f(x_2))^{k}Y_{W}(u,f(x_1))Y_{W}(v,f(x_2))\in \Hom (W,W((x_1,x_2))).$$
As $f(x)\in x\C[[x]]$ with $f'(0)\ne 0$, we have
$$f(x_1)-f(x_2)=(x_1-x_2)g(x_1,x_2),$$
where $g(x_1,x_2)\in \C[[x_1,x_2]]$ with $g(0,0)=f'(0)$. Since $g(0,0)\ne 0$, 
$g(x_1,x_2)$ is an invertible element of $\C[[x_1,x_2]]$.
Then we get
$$(x_1-x_2)^{k}Y_{W}(u,f(x_1))Y_{W}(v,f(x_2))\in \Hom (W,W((x_1,x_2))).$$
The substitution $x_1=\phi_{f}(x_2,x_0)=f^{-1}(\phi(f(x_2),x_0))$ amounts to the substitution $f(x_1)=\phi(f(x_2),x_0)$. Thus
\begin{eqnarray*}
&&g(\phi_{f}(x_2,x_0),x_2)^{k}\left((x_1-x_2)^{k}Y_{W}(u,f(x_1))Y_{W}(v,f(x_2))\right)|_{x_1=\phi_{f}(x_2,x_0)}\\
&=&\left((f(x_1)-f(x_2))^{k}Y_{W}(u,f(x_1))Y_{W}(v,f(x_2))\right)|_{f(x_1)=\phi(f(x_2),x_0)}\\
&=&(\phi(f(x_2),x_0)-f(x_2))^{k}Y_{W}(Y(u,x_0)v,f(x_2))\\
&=&(f(\phi_{f}(x_2,x_0))-f(x_2))^{k}Y_{W}(Y(u,x_0)v,f(x_2))\\
&=&g(\phi_{f}(x_2,x_0),x_2)^{k}(\phi_{f}(x_2,x_0)-x_2)^{k}Y_{W}(Y(u,x_0)v,f(x_2)),
\end{eqnarray*}
which implies
\begin{eqnarray*}
&&\left((x_1-x_2)^{k}Y_{W}(u,f(x_1))Y_{W}(v,f(x_2))\right)|_{x_1=\phi_{f}(x_2,x_0)}\\
&=&(\phi_{f}(x_2,x_0)-x_2)^{k}Y_{W}(Y(u,x_0)v,f(x_2)).
\end{eqnarray*}
This proves that $(W,Y_{W}^{f})$ carries the structure of a $\phi_{f}$-coordinated $V$-module.
\end{proof}

Furthermore, it can be readily seen that if $\theta$ is a homomorphism  of $\phi$-coordinated $V$-modules  
from $(W_1,Y_{W_1})$ to $(W_2,Y_{W_2})$,
then $\theta$ is also a homomorphism of $\phi_{f}$-coordinated $V$-modules from $(W_1,Y^{f}_{W_1})$ to $(W_2,Y^{f}_{W_2})$.  
To summarize we have:

\begin{theorem}\label{tisomorphism}
Let $V$ be a weak quantum vertex algebra, let $\phi(x,z)$ be an associate of $F_{\add}(x,y)$, and let
$f(x)\in x\C[[x]]$ with $f'(0)\ne 0$. Then the category of $\phi$-coordinated $V$-modules is  isomorphic to the category of
$\phi_{f}$-coordinated $V$-modules canonically. 
\end{theorem}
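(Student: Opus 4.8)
The plan is to establish the categorical isomorphism by exhibiting an explicit functor in each direction and verifying that the two compositions are the identity. The heavy lifting on objects has already been done in Proposition~\ref{pf-coordinate}: starting from a $\phi$-coordinated $V$-module $(W,Y_W)$, the assignment $Y_W^f(v,x)=Y_W(v,f(x))$ produces a $\phi_f$-coordinated $V$-module on the same underlying space $W$. So the first step is to package this as a functor $\mathcal{F}$ from the category of $\phi$-coordinated $V$-modules to the category of $\phi_f$-coordinated $V$-modules, sending $(W,Y_W)\mapsto (W,Y_W^f)$ and acting as the identity on morphisms; the remark preceding the theorem already notes that a homomorphism $\theta$ of $\phi$-coordinated modules remains a homomorphism after the substitution, since the defining intertwining condition $\theta Y_{W_1}(v,x)=Y_{W_2}(v,x)\theta$ is preserved verbatim under $x\mapsto f(x)$.

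The second step is to construct the inverse functor. Because $f(x)\in x\C[[x]]$ with $f'(0)\ne 0$ is invertible under composition, I would apply the same construction using $f^{-1}$ in place of $f$. The key bookkeeping observation is that the associate transforms correctly: one checks $(\phi_f)_{f^{-1}}=\phi$, which follows directly from the definition in Lemma~\ref{lassociate-f} since
\begin{eqnarray*}
(\phi_f)_{f^{-1}}(x,z)=f(\phi_f(f^{-1}(x),z))=f\!\left(f^{-1}(\phi(f(f^{-1}(x)),z))\right)=\phi(x,z).
\end{eqnarray*}
Thus applying Proposition~\ref{pf-coordinate} with $(\phi_f,f^{-1})$ in place of $(\phi,f)$ sends a $\phi_f$-coordinated module $(W,Y_W)$ to a genuine $\phi$-coordinated module, giving a functor $\mathcal{G}$ in the opposite direction.

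The third and final step is to verify $\mathcal{G}\circ\mathcal{F}=\mathrm{id}$ and $\mathcal{F}\circ\mathcal{G}=\mathrm{id}$. On underlying spaces this is automatic, since both constructions leave $W$ fixed. On vertex operator maps it reduces to the elementary identity $f^{-1}(f(x))=x=f(f^{-1}(x))$: applying $\mathcal{F}$ then $\mathcal{G}$ sends $Y_W(v,x)$ to $Y_W(v,f(x))$ and then to $Y_W(v,f(f^{-1}(x)))=Y_W(v,x)$, and symmetrically for the other composition. Both functors are the identity on morphisms, so the compositions are the identity functors.

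I do not expect a genuine obstacle here; the substantive analytic content—that the substitution preserves the truncation condition $Y_W(v,x)w\in W((x))$ and the $\phi$-associativity~(\ref{ephi-assoc})—is entirely contained in Proposition~\ref{pf-coordinate}. The only point deserving care is confirming that $\mathcal{G}$ is built by legitimately reapplying that proposition, which requires knowing both that $f^{-1}$ lies in $x\C[[x]]$ with nonzero derivative at $0$ (true, with $(f^{-1})'(0)=1/f'(0)$) and that the relevant associate is $\phi_f$ rather than $\phi$; the displayed computation $(\phi_f)_{f^{-1}}=\phi$ closes this gap. Everything else is formal manipulation of the functorial data.
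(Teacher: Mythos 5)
Your proposal is correct and follows essentially the same route as the paper: the paper obtains the theorem directly by combining Proposition~\ref{pf-coordinate} with the observation that homomorphisms are preserved under the substitution $x\mapsto f(x)$, leaving the inverse functor implicit in the group structure of $G=\{f\in x\C[[x]]\mid f'(0)\ne 0\}$ under composition. Your explicit verification of $(\phi_f)_{f^{-1}}=\phi$ and of the two composite functors being identities simply fills in details the paper omits.
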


Set $G=\{ f(z)\in z\C[[z]]\ |\  f'(0)\ne 0\}$. Then $G$ is a group with respect to the composition. 
It is straightforward to see that Lemma \ref{lassociate-f} defines a right action of group $G$ 
on the set of associates of $F_{\add}(x,y)$. This gives rise to an equivalence relation on the set of associates of $F_{\add}(x,y)$. 

\begin{lemma}\label{lequivalent}
Let $\phi(x,z)=e^{zp(x)\frac{d}{dx}}(x)$ with $p(x)\in \C((x))$. Then $\phi(x,z)$ is equivalent to $F_{\add}(x,z)$ if and only if
$p(x)\in \C[[x]]$ with $p(0)\ne 0$.
\end{lemma}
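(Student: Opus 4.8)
The plan is to track how the parameter $p(x)$ of an associate $\phi_p(x,z)=e^{zp(x)\frac{d}{dx}}x$ transforms under the right $G$-action $\phi\mapsto\phi_f$ of Lemma \ref{lassociate-f}, and then to recognize equivalence to $F_{\add}$ as the solvability of a single first-order equation for $f$.

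First I would record a recovery formula for the parameter: differentiating $\phi_p(x,z)=e^{zp(x)\frac{d}{dx}}x$ in $z$ and setting $z=0$ gives
$$\partial_z\phi_p(x,z)\big|_{z=0}=p(x)\tfrac{d}{dx}x=p(x).$$
Hence, by the uniqueness asserted in Proposition \ref{pclassification-associate}, any associate $\psi$ has parameter exactly $\partial_z\psi(x,z)|_{z=0}$. The key computation is then to apply this to $\phi_f$. By Lemma \ref{lassociate-f}, $\phi_f(x,z)=f^{-1}(\phi_p(f(x),z))$ is again an associate, say $\phi_f=\phi_q$, and the recovery formula yields
$$q(x)=\partial_z\phi_f(x,z)\big|_{z=0}=(f^{-1})'\!\big(\phi_p(f(x),0)\big)\,\partial_z\phi_p(f(x),z)\big|_{z=0}=(f^{-1})'(f(x))\,p(f(x))=\frac{p(f(x))}{f'(x)},$$
using $\phi_p(f(x),0)=f(x)$ and the formal inverse-function identity $(f^{-1})'(f(x))=1/f'(x)$ (valid since $f'(0)\ne 0$ makes $f'(x)$ a unit in $\C[[x]]$). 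Thus the induced $G$-action on parameters is simply $p(x)\mapsto p(f(x))/f'(x)$.

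Now $F_{\add}(x,z)=x+z=\phi_{p_0}$ with $p_0(x)=1$. Because the $G$-action is by a group, $\phi_p$ is equivalent to $F_{\add}$ precisely when $\phi_p$ lies in the orbit of $F_{\add}$, i.e.\ when $(F_{\add})_g=\phi_p$ for some $g\in G$. By the parameter formula applied to $p_0=1$, the associate $(F_{\add})_g$ has parameter $p_0(g(x))/g'(x)=1/g'(x)$, so by uniqueness $(F_{\add})_g=\phi_p$ holds if and only if $g'(x)=1/p(x)$. It therefore remains to decide when such a $g\in G$ exists. If $p(x)\in\C[[x]]$ with $p(0)\ne 0$, then $1/p(x)\in\C[[x]]$ has nonzero constant term, and its formal antiderivative $g(x)=\int_0^x p(t)^{-1}\,dt$ lies in $x\C[[x]]$ with $g'(0)=p(0)^{-1}\ne 0$; hence $g\in G$ and $\phi_p\sim F_{\add}$. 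Conversely, if $g\in G$ satisfies $g'(x)=1/p(x)$, then $g'(x)\in\C[[x]]$ with $g'(0)\ne 0$, so $p(x)=1/g'(x)\in\C[[x]]$ with $p(0)\ne 0$.

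The only genuine content is the parameter-transformation identity $q(x)=p(f(x))/f'(x)$; I expect this to be the main (and really the sole) obstacle, and it is handled cleanly by differentiating in $z$ at $z=0$ rather than by manipulating the exponential operator directly. Once that identity is in hand, both implications reduce to the formal observation that $g'=1/p$ is solvable within $G$ exactly when $1/p$, equivalently $p$, is a power series with nonzero constant term.
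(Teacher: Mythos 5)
Your argument is correct, and it reaches the decisive equation $p(x)f'(x)=1$ by a route that differs from the paper's in one interesting way. The paper applies $f$ to the desired identity $\phi(x,z)=f^{-1}(f(x)+z)$ and uses the operator identity $f\bigl(e^{zp(x)\frac{d}{dx}}(x)\bigr)=e^{zp(x)\frac{d}{dx}}f(x)$, so that $f(\phi(x,z))=f(x)+z$ collapses to $p(x)f'(x)=1$ to all orders in $z$ at once (once $pf'=1$, the higher terms vanish automatically because $(p\frac{d}{dx})^{2}f=p\frac{d}{dx}(1)=0$). You instead extract only the first-order data $\partial_z\psi(x,z)|_{z=0}$, note that this recovers the parameter of any associate, and then invoke the uniqueness clause of Proposition \ref{pclassification-associate} to conclude that matching the $z^{1}$-coefficient already forces equality of associates; that is what spares you from checking the higher $z$-coefficients directly, at the cost of leaning more heavily on the classification result. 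Your route has the side benefit of producing the general transformation law $p\mapsto (p\circ f)/f'$ for the parameter under the $G$-action, from which the characterization stated without proof in Remark \ref{rgeneral-equivalence} follows immediately. The final step --- that $g'=1/p$ is solvable with $g\in G$ exactly when $p\in\C[[x]]$ with $p(0)\ne 0$ --- coincides with the paper's.
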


\begin{proof} By definition,  $\phi(x,z)$ is equivalent to $F_{\add}(x,z)$ if and only if
 there exists $f(x)\in x\C[[x]]$ with $f'(0)\ne 0$ such that
$\phi(x,z)=f^{-1}(f(x)+z)$, or equivalently, $f(\phi(x,z))=f(x)+z$.
Noticing that
$$f(\phi(x,z))=f\left(e^{zp(x)\frac{d}{dx}}(x)\right)=e^{zp(x)\frac{d}{dx}}f(x),$$
we see that $f(\phi(x,z))=f(x)+z$ if and only if $p(x)\frac{d}{dx}f(x)=1$, i.e., 
$p(x)f'(x)=1$. It is clear that there exists $f(x)\in x\C[[x]]$ with $f'(0)\ne 0$ such that 
$p(x)f'(x)=1$ if and only if  $p(x)\in \C[[x]]$ with $p(0)\ne 0$.
\end{proof}

\begin{remark}\label{rnonequivalence}
It follows from Lemma \ref{lequivalent} that  $\phi_{n}(x,z)$ for $n\ne -1$ is not 
equivalent to $F_{\add}(x,z)=x+z$. In particular, $\phi_{0}(x,z)=xe^{z}$ is {\em not} equivalent to 
$F_{\add}(x,z)$.  
\end{remark}

\begin{remark}\label{rgeneral-equivalence}
Let $p_1(x),p_2(x)\in \C((x))$. One can show that $\phi_{p_1(x)}(x,z)$ is equivalent to $\phi_{p_2(x)}(x,z)$ if and only if
$p_1(x)f'(x)=p_2(f(x))$ for some $f(x)\in x\C[[x]]$ with $f'(0)\ne 0$. But, we are unable to get an explicit characterization.
\end{remark}

\section{$\phi$-coordinated module $(D(W),Y_{W}^{*})$}
In this section, given any module $(W,Y_{W})$ for a weak quantum vertex algebra $V$ with a constant map $\S$, 
we construct a $\phi$-coordinated $V$-module $(D(W),Y_{W}^{*})$ out of $W^{*}$ with $\phi(x,z)=\frac{x}{1+zx}$.
We also show that for a vertex operator algebra $V$, the categories of  $\phi$-coordinated $V$-modules and 
modules for $V$ viewed as a vertex algebra are isomorphic. Note that a $V$-module is the same as a $\phi_{-1}$-coordinated 
$V$-module while $\frac{x}{1+zx}=\phi_1(x,-z)=\phi^{*}_{-1}(x,z)$.

First, we show that a certain Jacobi identity holds on $\phi$-coordinated $V$-modules with $\phi(x,z)=\frac{x}{1+zx}$
 just as with usual $V$-modules.
(A Jacobi identity on $\phi$-coordinated modules with $\phi(x,z)=xe^{z}$ was obtained in \cite{Li-phi}.)

\begin{prop}\label{pphi1}
Let $V$ be a weak quantum vertex algebra with a constant map $\S$ and
let $\phi(x,z)=\phi_{1}(x,-z)=\frac{x}{1+zx}$. 
Then, in the definition of a $\phi$-coordinated $V$-module, the second condition is equivalent to
\begin{eqnarray}\label{phi1-jacobi}
&&z^{-1}\delta\left(\frac{x_2^{-1}-x_1^{-1}}{z}\right)Y_{W}(u,x_{1})Y_{W}(v,x_2)\nonumber\\
&&\hspace{1cm} -z^{-1}\delta\left(\frac{x_1^{-1}-x_2^{-1}}{-z}\right)\sum_{i=1}^{r}Y_{W}(v^{(i)},x_2)Y_{W}(u^{(i)},x_1)\nonumber\\
&=&x_2\delta\left(\frac{x_1^{-1}+z}{x_2^{-1}}\right)Y_{W}(Y(u,-z)v,x_2)
\end{eqnarray}
for $u,v\in V$, where $u^{(i)},v^{(i)}\in V$ such that $Y(u,x)v=\sum_{i=1}^{r}e^{x\D}Y(v^{(i)},-x)u^{(i)}$.
\end{prop}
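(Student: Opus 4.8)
The plan is to recognize (\ref{phi1-jacobi}) as the $\phi$-coordinated incarnation of the classical equivalence ``weak commutativity together with associativity $\Longleftrightarrow$ Jacobi identity,'' and to run the usual formal-calculus argument, transported through the substitution $x\mapsto x^{-1}$. What makes the associate $\phi(x,z)=\frac{x}{1+zx}$ amenable to this is the relation $\phi(x,z)^{-1}=x^{-1}+z$, so that the localization $x_1=\phi(x_2,-z)$ reads $x_1^{-1}=x_2^{-1}-z$; this is exactly why the $\delta$-functions in (\ref{phi1-jacobi}) are written in the reciprocal variables $x_1^{-1},x_2^{-1}$. I would record two preliminary identities. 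First, the prefactor identity $\phi(x_2,-z)-x_2=zx_1x_2$, valid on the locus $x_1=\phi(x_2,-z)$, which will absorb the associativity factor $(\phi(x_2,-z)-x_2)^{k}$. Second, the three-term $\delta$-identity
\begin{eqnarray*}
z^{-1}\delta\left(\frac{x_2^{-1}-x_1^{-1}}{z}\right)-z^{-1}\delta\left(\frac{x_1^{-1}-x_2^{-1}}{-z}\right)=x_2\delta\left(\frac{x_1^{-1}+z}{x_2^{-1}}\right),
\end{eqnarray*}
which is nothing but the standard three-term $\delta$-identity after setting $x_0=z$ and replacing $x_1,x_2$ by $x_2^{-1},x_1^{-1}$.

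For the implication (second condition) $\Longrightarrow$ (\ref{phi1-jacobi}), the second condition provides, for each $u,v$, an integer $k$ with $A(x_1,x_2):=(x_1-x_2)^{k}Y_{W}(u,x_1)Y_{W}(v,x_2)\in\Hom(W,W((x_1,x_2)))$ together with the associativity $A|_{x_1=\phi(x_2,-z)}=(\phi(x_2,-z)-x_2)^{k}Y_{W}(Y(u,-z)v,x_2)$; Proposition \ref{pphi-property} (this is where the constancy of $\S$ enters) supplies the weak commutativity $A(x_1,x_2)=(x_1-x_2)^{k}\sum_iY_{W}(v^{(i)},x_2)Y_{W}(u^{(i)},x_1)$. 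I would then feed these two weak relations into the standard $\delta$-function assembly: each $\delta$-term on the left of (\ref{phi1-jacobi}) localizes the corresponding product onto $x_1=\phi(x_2,-z)$, where $x_1-x_2$ is invertible (there $x_1^{-1}-x_2^{-1}=-z\neq0$, so $x_1\neq x_2$), and the localized values of both products coincide by weak commutativity. Writing $Y_{W}(u,x_1)Y_{W}(v,x_2)=(x_1-x_2)^{-k}A$ and localizing, the factor $(x_1-x_2)^{-k}=(zx_1x_2)^{-k}$ cancels the $(\phi(x_2,-z)-x_2)^{k}=(zx_1x_2)^{k}$ coming from associativity, so the three-term $\delta$-identity collapses the two product terms to $x_2\delta(\frac{x_1^{-1}+z}{x_2^{-1}})Y_{W}(Y(u,-z)v,x_2)$, which is (\ref{phi1-jacobi}). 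This is the verbatim analogue of the ordinary-module case in Remark \ref{rmodule-Jacobi}, with $x_i$ replaced by $x_i^{-1}$.

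For the converse I would reverse the extraction. Taking $\Res_{z}$ of (\ref{phi1-jacobi}) produces a commutator-type formula whose right-hand side is $\Res_{z}$ of the last $\delta$-term; combined with the fact that $Y(u,z)v\in V((z))$ has only finitely many negative powers of $z$, this yields both the rationality bound $k$ (so that $(x_1-x_2)^{k}Y_{W}(u,x_1)Y_{W}(v,x_2)\in\Hom(W,W((x_1,x_2)))$) and the weak commutativity. The associativity formula of Definition \ref{dphi-module} is then recovered from the localization encoded in the right-hand $\delta$-function: substituting $x_1=\phi(x_2,-z)$ isolates $Y_{W}(Y(u,-z)v,x_2)$ and reconstructs $(\phi(x_2,-z)-x_2)^{k}Y_{W}(Y(u,-z)v,x_2)$ out of $(x_1-x_2)^{k}Y_{W}(u,x_1)Y_{W}(v,x_2)$. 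These are the standard residue-and-substitution extractions, again carried out in the variables $x_i^{-1}$.

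I expect the main obstacle to be the $\delta$-function bookkeeping in the reciprocal variables: verifying the substitution property of $\delta(\frac{x_2^{-1}-x_1^{-1}}{z})$ against series lying in $W((x_1,x_2))$, confirming with correct signs that the localization $x_1=\phi(x_2,-z)$ matches the support of each $\delta$, and checking the prefactor cancellation $(x_1-x_2)^{k}=(zx_1x_2)^{k}=(\phi(x_2,-z)-x_2)^{k}$ on that support. Once these are in place everything reduces to the classical assembly, so the genuine content is the transport by $x\mapsto x^{-1}$ together with the input of weak commutativity from Proposition \ref{pphi-property}.
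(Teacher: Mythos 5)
Your proposal is correct and follows essentially the same route as the paper's proof: the three-term $\delta$-identity in the reciprocal variables $x_1^{-1},x_2^{-1}$, the prefactor identity $\phi(x_2,-z)-x_2=zx_1x_2$ on the locus $x_1^{-1}=x_2^{-1}-z$, weak commutativity supplied by Proposition \ref{pphi-property} in the direction from the second condition to (\ref{phi1-jacobi}), and the $\Res_{z}z^{k}x_1^{k}x_2^{k}$ extraction (plus the substitution/residue recovery of associativity) in the converse direction. The paper carries out exactly these steps, so no further comparison is needed.
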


\begin{proof} First, assume that  (\ref{phi1-jacobi}) holds for any $u,v\in V$. 
Let $u,v\in V$. There exists a nonnegative integer $k$ such that $x^{k}Y(u,x)v\in V[[x]]$. 
Applying $\Res_{z}z^{k}x_1^{k}x_2^{k}$ to (\ref{phi1-jacobi}) we get
\begin{eqnarray}\label{elocality-k}
\   \   \   \   \    \   \   \    (x_1-x_2)^{k}Y_{W}(u,x_{1})Y_{W}(v,x_2)=(x_1-x_2)^{k}\sum_{i=1}^{r}Y_{W}(v^{(i)},x_2)Y_{W}(u^{(i)},x_{1}),
\end{eqnarray}
which implies
$$(x_1-x_2)^{k}Y_{W}(u,x_{1})Y_{W}(v,x_2)\in \Hom (W,W((x_1,x_2))).$$
Multiplying both sides of (\ref{phi1-jacobi}) by $z^{k}x_1^{k}x_2^{k}$, and 
then using (\ref{elocality-k}) and the basic delta-function properties  we get
\begin{eqnarray*}
&&x_2\delta\left(\frac{x_1^{-1}+z}{x_2^{-1}}\right)\left( (x_1-x_2)^{k}Y_{W}(u,x_{1})Y_{W}(v,x_2)\right)\\
&=&x_2\delta\left(\frac{x_1^{-1}+z}{x_2^{-1}}\right)(zx_1x_2)^{k}Y_{W}(Y(u,-z)v,x_2),
\end{eqnarray*}
which can also be written as
\begin{eqnarray*}
&&x_1\delta\left(\frac{x_2^{-1}-z}{x_1^{-1}}\right)\left( (x_1-x_2)^{k}Y_{W}(u,x_{1})Y_{W}(v,x_2)\right)\\
&=&x_1\delta\left(\frac{x_2^{-1}-z}{x_1^{-1}}\right)(zx_1x_2)^{k}Y_{W}(Y(u,-z)v,x_2).
\end{eqnarray*}
Then applying $\Res_{x_1}x_1^{-2}$ we get
\begin{eqnarray*}
\left( (x_1-x_2)^{k}Y_{W}(u,x_{1})Y_{W}(v,x_2)\right)|_{x_1^{-1}=x_2^{-1}-z}
=\left(\frac{zx_2}{x_2^{-1}-z}\right)^{k}Y_{W}(Y(u,-z)v,x_2).
\end{eqnarray*}
That is,
\begin{eqnarray*}
\left( (x_1-x_2)^{k}Y_{W}(u,x_{1})Y_{W}(v,x_2)\right)|_{x_1=\frac{x_2}{1-zx_2}}
=\left(\frac{x_2}{1-zx_2}-x_2\right)^{k}Y_{W}(Y(u,-z)v,x_2).
\end{eqnarray*}
Thus
\begin{eqnarray*}
\left( (x_1-x_2)^{k}Y_{W}(u,x_{1})Y_{W}(v,x_2)\right)|_{x_1=\frac{x_2}{1+zx_2}}
=\left(\frac{x_2}{1+zx_2}-x_2\right)^{k}Y_{W}(Y(u,z)v,x_2).
\end{eqnarray*}
 This proves that $(W,Y_{W})$ is a $\phi$-coordinated $V$-module.
 
On the other hand, assume that $(W,Y_{W})$ is a $\phi$-coordinated $V$-module. 
Let $u,v\in V$, and let $u^{(i)},v^{(i)}\in V$, $1\le i\le r$ such that $Y(u,x)v=\sum_{i=1}^{r}e^{x\D}Y(v^{(i)},-x)u^{(i)}$.
By Proposition \ref{pphi-property}, there exists a nonnegative integer $k$ such that
\begin{eqnarray}
\   \   \   \   \   \   \   \   (x_1-x_2)^{k}Y_{W}(u,x_1)Y_{W}(v,x_2)=(x_1-x_2)^{k}\sum_{i=1}^{r}Y_{W}(v^{(i)},x_2)Y_{W}(u^{(i)},x_1).
\end{eqnarray}
As this implies $(x_1-x_2)^{k}Y_{W}(u,x_1)Y_{W}(v,x_2)\in \Hom (W,W((x_1,x_2)))$, we have
\begin{eqnarray*}
\left((x_1-x_2)^{k}Y_{W}(u,x_1)Y_{W}(v,x_2)\right)|_{x_1=\phi(x_2,z)}=\left(\frac{x_2}{1+zx_2}-x_2\right)^{k}Y_{W}(Y(u,z)v,x_2),
\end{eqnarray*}
which gives
\begin{eqnarray*}
\left( (x_1-x_2)^{k}Y_{W}(u,x_{1})Y_{W}(v,x_2)\right)|_{x_1^{-1}=x_2^{-1}-z}
=\left(\frac{zx_2}{x_2^{-1}-z}\right)^{k}Y_{W}(Y(u,-z)v,x_2).
\end{eqnarray*}
Note that
$$z^{-1}\delta\left(\frac{x_2^{-1}-x_1^{-1}}{z}\right) -z^{-1}\delta\left(\frac{x_1^{-1}-x_2^{-1}}{-z}\right)
=x_2\delta\left(\frac{x_1^{-1}+z}{x_2^{-1}}\right).$$
Then Jacobi identity (\ref{phi1-jacobi}) follows.
\end{proof}

\begin{definition}\label{dYstar}
Let $V$ be a weak quantum vertex algebra and let $(W,Y_{W})$ be a $V$-module. 
Define a linear map
\begin{eqnarray*}
Y_{W}^{*}(\cdot,x): &V&\rightarrow (\End W^{*})[[x,x^{-1}]],\\
&v&\mapsto Y_{W}^{*}(v,x)
\end{eqnarray*}
by
\begin{eqnarray}
\<Y_{W}^{*}(v,x)\alpha, w\>=\<\alpha, Y_{W}(v,x^{-1})w\>
\end{eqnarray}
for $v\in V,\ \alpha\in W^{*},\ w\in W$. 
\end{definition}

\begin{lemma}\label{lY*D}
We have
\begin{eqnarray}
Y_{W}^{*}(\D v,x)\alpha=-x^{2}\frac{d}{dx}Y_{W}^{*}(v,x)\alpha
\end{eqnarray}
for $v\in V,\ \alpha\in W^{*}$.
\end{lemma}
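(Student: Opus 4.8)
The plan is to verify the identity by evaluating both sides on an arbitrary vector $w\in W$ and reducing everything to the defining formula for $Y_{W}^{*}$ in Definition \ref{dYstar} together with the derivative property of $\D$ recorded in Remark \ref{rmodule-Jacobi}. First I would apply the definition with $v$ replaced by $\D v$ to write
\begin{eqnarray*}
\<Y_{W}^{*}(\D v,x)\alpha,w\>=\<\alpha,Y_{W}(\D v,x^{-1})w\>.
\end{eqnarray*}
Since $(W,Y_{W})$ is a genuine $V$-module, Remark \ref{rmodule-Jacobi} supplies $Y_{W}(\D v,y)=\frac{d}{dy}Y_{W}(v,y)$, so the right-hand side is exactly the series $\frac{d}{dy}Y_{W}(v,y)$ with the substitution $y=x^{-1}$ carried out.

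The heart of the argument is the formal chain rule for the substitution $y=x^{-1}$: for any $g(y)\in(\End W)[[y,y^{-1}]]$, differentiating termwise $g(x^{-1})=\sum_{n}g_{n}x^{-n}$ gives $\frac{d}{dx}\,g(x^{-1})=-x^{-2}\,g'(x^{-1})$, equivalently $g'(x^{-1})=-x^{2}\frac{d}{dx}\,g(x^{-1})$. Applying this with $g(y)=Y_{W}(v,y)$, and recalling $\<\alpha,g(x^{-1})w\>=\<Y_{W}^{*}(v,x)\alpha,w\>$, I would obtain
\begin{eqnarray*}
\<\alpha,Y_{W}(\D v,x^{-1})w\>=\<\alpha,g'(x^{-1})w\>=-x^{2}\frac{d}{dx}\<\alpha,g(x^{-1})w\>=-x^{2}\frac{d}{dx}\<Y_{W}^{*}(v,x)\alpha,w\>.
\end{eqnarray*}
Because $-x^{2}\frac{d}{dx}$ acts only on the formal variable $x$ while $\alpha$ and $w$ are independent of $x$, it commutes past the pairing into the first slot, so the last expression equals $\<-x^{2}\frac{d}{dx}Y_{W}^{*}(v,x)\alpha,w\>$. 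As $w\in W$ is arbitrary, the two coefficient vectors in $W^{*}$ agree, yielding $Y_{W}^{*}(\D v,x)\alpha=-x^{2}\frac{d}{dx}Y_{W}^{*}(v,x)\alpha$.

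I do not anticipate a genuine obstacle, as the lemma is essentially a bookkeeping statement about formal calculus. The only points demanding care are the correct application of the chain rule under $y\mapsto x^{-1}$, which is precisely what produces the weight factor $-x^{2}$, and the observation that differentiation in $x$ and the $x$-independent pairing $\<\cdot,\cdot\>$ commute so that the operator $-x^{2}\frac{d}{dx}$ may be moved freely between the two slots.
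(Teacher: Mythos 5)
Your proposal is correct and follows essentially the same route as the paper's proof: apply the definition of $Y_{W}^{*}$, use $Y_{W}(\D v,y)=\frac{d}{dy}Y_{W}(v,y)$ for a genuine $V$-module, and convert $\frac{d}{dx^{-1}}$ into $-x^{2}\frac{d}{dx}$ by the formal chain rule. The paper's version is just a one-line computation of exactly these steps, so no further comment is needed.
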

 
\begin{proof} It is straightforward: For any $w\in W$, we have
\begin{eqnarray*}
&&\<Y_{W}^{*}(\D v,x)\alpha,w\>=\<\alpha,Y_{W}(\D v,x^{-1})w\>=\frac{d}{dx^{-1}}\<\alpha,Y_{W}(v,x^{-1})w\>\\
&=&-x^{2}\frac{d}{dx}\<\alpha,Y_{W}(v,x^{-1})w\>
=-x^{2}\frac{d}{dx}\<Y_{W}^{*}(v,x)\alpha,w\>,
\end{eqnarray*}
as needed. 
\end{proof}

\begin{definition}\label{dDual-W}
Let $V$ be a weak quantum vertex algebra, $(W,Y_{W})$ a $V$-module. 
Define $D(W)$ to be the set of $\alpha\in W^{*}$ satisfying the condition 
\begin{eqnarray}
Y_{W}^{*}(v,x)\alpha\in W^{*}((x))\   \   \   \mbox{ for every }v\in V.
\end{eqnarray}
\end{definition}

It is clear that $D(W)$  is a subspace of $W^{*}$. Furthermore, we have:

\begin{theorem}\label{tmain1}
Let $V$ be a weak quantum vertex algebra with a constant map $\S$ and let $(W,Y_{W})$ be a $V$-module. Then
\begin{eqnarray}
Y_{W}^{*}(v,x)\alpha\in D (W)((x))\   \   \    \mbox{ for }v\in V,\  \alpha\in D(W).
\end{eqnarray}
Furthermore, the pair $(D(W),Y_{W}^{*})$ carries the structure of a $\phi$-coordinated $V^{o}$-module with $\phi(x,z)=\frac{x}{1+zx}$.
\end{theorem}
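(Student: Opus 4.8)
The plan is to treat $(D(W),Y_W^*)$ as a contragredient in the spirit of Frenkel--Huang--Lepowsky, where the dualizing substitution $x\mapsto x^{-1}$ is exactly what forces both the opposite algebra $V^o$ and the associate $\phi(x,z)=\frac{x}{1+zx}$ to appear. The mechanism is the identity, immediate from Definition \ref{dYstar},
\begin{eqnarray*}
\langle Y_W^*(u,x_1)Y_W^*(v,x_2)\alpha,w\rangle=\langle \alpha,Y_W(v,x_2^{-1})Y_W(u,x_1^{-1})w\rangle,
\end{eqnarray*}
which converts a product of $Y_W^*$'s into a product of $Y_W$'s in the \emph{reversed} order and with \emph{inverted} variables. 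The reversal of order is precisely accounted for by $Y^o(u,z)v=e^{z\D}Y(v,-z)u$ (Remark \ref{rdual-qva}), while the inversion $x\mapsto x^{-1}$ carries the module associate $\phi_{-1}(x,z)=x+z$ to $\phi_{-1}^{*}(x,z)=\frac{x}{1+zx}$ (Remark \ref{rstar-phi}); the governing elementary relation is $x_1=\frac{x_2}{1+zx_2}\Longleftrightarrow x_1^{-1}=x_2^{-1}+z$. I will verify the conditions of Definition \ref{dphi-module} for $(D(W),Y_W^*)$ over $V^o$, invoking Proposition \ref{pphi1} (applied to $V^o$, which is again a weak quantum vertex algebra with constant $\S$, by Remark \ref{rdual-qva}) to replace the associativity condition by the Jacobi identity (\ref{phi1-jacobi}).

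The vacuum and truncation conditions are immediate: $\langle Y_W^*({\bf 1},x)\alpha,w\rangle=\langle\alpha,Y_W({\bf 1},x^{-1})w\rangle=\langle\alpha,w\rangle$ gives $Y_W^*({\bf 1},x)=1$, while $Y_W^*(v,x)\alpha\in W^*((x))$ for $\alpha\in D(W)$ is the defining property of $D(W)$ (Definition \ref{dDual-W}). Next I establish weak commutativity: starting from the $\S$-locality (\ref{eS-locality}) for the module $W$ applied to the pair $(v,u)$ (the $f_i$ being constant, as $\S$ is constant), I substitute $x_1\mapsto x_2^{-1}$, $x_2\mapsto x_1^{-1}$, pair against $w$, and clear denominators by $(x_2^{-1}-x_1^{-1})^k=(x_1x_2)^{-k}(x_1-x_2)^k$; this produces a nonnegative integer $k$ with
\begin{eqnarray*}
(x_1-x_2)^{k}Y_W^*(u,x_1)Y_W^*(v,x_2)=(x_1-x_2)^{k}\sum_{i=1}^{r}Y_W^*(v^{(i)},x_2)Y_W^*(u^{(i)},x_1)
\end{eqnarray*}
as operators on $W^*$, where the $u^{(i)},v^{(i)}$ are exactly the data of the $V^o$-skew-symmetry $Y^o(u,x)v=\sum_i e^{x\D}Y^o(v^{(i)},-x)u^{(i)}$.

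Now comes the closure of $D(W)$. Fix $\alpha\in D(W)$ and write $Y_W^*(v,x_2)\alpha=\sum_n\beta_n x_2^{-n-1}\in W^*((x_2))$; the point is to show each $\beta_n\in D(W)$. On one hand $(x_1-x_2)^k Y_W^*(u,x_1)Y_W^*(v,x_2)\alpha$ is lower truncated in $x_2$, since $Y_W^*(v,x_2)\alpha\in W^*((x_2))$; on the other hand, by the weak commutativity it equals $(x_1-x_2)^k\sum_i Y_W^*(v^{(i)},x_2)Y_W^*(u^{(i)},x_1)\alpha$, which is lower truncated in $x_1$ because $Y_W^*(u^{(i)},x_1)\alpha\in W^*((x_1))$. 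Being lower truncated in both variables, this common expression lies in $W^*((x_1,x_2))$, and a standard comparison of coefficients in $x_2$ then yields $Y_W^*(u,x_1)\beta_n\in W^*((x_1))$ for every $u$, i.e. $\beta_n\in D(W)$. With closure in hand, the products $Y_W^*(u,x_1)Y_W^*(v,x_2)$ act on $D(W)$, and it remains to check the Jacobi identity (\ref{phi1-jacobi}) for $V^o$. For this I dualize the $\S$-Jacobi identity (\ref{eSjacobi-module}) of $W$, applied to the pair $(v,u)$ with $x_0=z$: substituting $x_1\mapsto x_2^{-1}$, $x_2\mapsto x_1^{-1}$ and pairing against $w$, the first two delta-terms become precisely those of (\ref{phi1-jacobi}), the right-hand delta $x_1\delta(\frac{x_2^{-1}-z}{x_1^{-1}})$ rewrites as $x_2\delta(\frac{x_1^{-1}+z}{x_2^{-1}})$ by the standard delta-function identity, and the operator $Y_W(Y(v,z)u,x_1^{-1})$ becomes, via $Y(v,z)u=e^{z\D}Y^o(u,-z)v$ and $Y_W(e^{z\D}a,s)=Y_W(a,s+z)$ (Lemma \ref{lD-phimodule} with $\phi=x+z$), the operator $Y_W(Y^o(u,-z)v,x_1^{-1}+z)$, which equals $Y_W(Y^o(u,-z)v,x_2^{-1})$ on the delta-function support $x_1^{-1}+z=x_2^{-1}$ and so pairs to $Y_W^*(Y^o(u,-z)v,x_2)$. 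This is exactly (\ref{phi1-jacobi}) for $V^o$, and Proposition \ref{pphi1} then delivers the $\phi$-coordinated $V^o$-module structure.

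The step I expect to be the main obstacle is the closure $Y_W^*(v,x)\alpha\in D(W)((x))$. It is the only point at which one cannot argue purely formally: one must combine the defining truncation property of $D(W)$ with the transferred weak commutativity to see that $(x_1-x_2)^k Y_W^*(u,x_1)Y_W^*(v,x_2)\alpha$ is lower truncated in \emph{both} variables, and then extract coefficients. Everything else reduces to the single substitution identity $x_1^{-1}=x_2^{-1}+z$ and standard delta-function calculus.
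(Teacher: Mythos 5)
Your argument is correct, and its overall architecture coincides with the paper's: both dualize the module $\S$-Jacobi identity under $x\mapsto x^{-1}$, use the skew-symmetry $Y(v,x_0)u=e^{x_0\D}Y^{o}(u,-x_0)v$ together with a $\D$-shift to make $V^{o}$ and $\phi(x,z)=\frac{x}{1+zx}$ appear (the paper performs the shift on the $Y_{W}^{*}$ side via Lemma \ref{lY*D}, writing $Y_{W}^{*}(e^{x_0\D}a,x_1)=Y_{W}^{*}\bigl(a,\tfrac{1}{x_1^{-1}+x_0}\bigr)$, while you perform it on the $Y_{W}$ side via Lemma \ref{lD-phimodule}; the two are equivalent on the delta-function support $x_1^{-1}+x_0=x_2^{-1}$), and both then invoke Proposition \ref{pphi1} for $V^{o}$. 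The one step where you genuinely diverge is the closure $Y_{W}^{*}(v,x)\alpha\in D(W)((x))$: the paper applies $\Res_{x_0}\Res_{x_1}x_1^{n}$ to the dualized Jacobi identity, which expresses $Y_{W}^{*}(v,x_2)\Res_{x_1}x_1^{n}Y_{W}^{*}(u,x_1)\alpha$ directly as the finite sum $\sum_{j\ge 0}\binom{-n-2}{j}x_2^{n+2+j}Y_{W}^{*}(u_{j}v,x_2)\alpha+\Res_{x_1}x_1^{n}\sum_{i}Y_{W}^{*}(u^{(i)},x_1)Y_{W}^{*}(v^{(i)},x_2)\alpha$, each term of which is visibly in $W^{*}((x_2))$, whereas you use the transferred weak commutativity to see that $(x_1-x_2)^{k}Y_{W}^{*}(u,x_1)Y_{W}^{*}(v,x_2)\alpha$ is lower truncated in both variables and then extract coefficients. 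Your route is valid, but the phrase ``standard comparison of coefficients'' hides a small necessary induction: multiplying by $(x_1-x_2)^{k}$ mixes the coefficients of the powers of $x_2$, so one must argue downward from the top nonzero coefficient $\beta_{N}$ (for which the coefficient of $x_2^{-N-1}$ in the product is just $x_1^{k}Y_{W}^{*}(u,x_1)\beta_{N}$) to recover each $Y_{W}^{*}(u,x_1)\beta_{n}\in W^{*}((x_1))$; the paper's residue identity produces the needed conclusion with no spurious factor to strip off. Either way the theorem is established, and your version has the mild advantage of isolating the weak commutativity of $Y_{W}^{*}$ as a reusable intermediate statement.
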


\begin{proof} Let $u,v\in V,\ \alpha\in D(W)$. From the $\S$-Jacobi identity for $Y_{W}$ we have
\begin{eqnarray}\label{edual-jacobi-0}
&&x_0^{-1}\delta\left(\frac{x_1^{-1}-x_2^{-1}}{x_0}\right)Y_{W}^{*}(v,x_2)Y_{W}^{*}(u,x_1)\alpha\nonumber\\
&&\hspace{1cm} -x_0^{-1}\delta\left(\frac{x_2^{-1}-x_1^{-1}}{-x_0}\right)
\sum_{i=1}^{r}Y_{W}^{*}(u^{(i)},x_{1})Y_{W}^{*}(v^{(i)},x_2)\alpha\nonumber\\
&=&x_2\delta\left(\frac{x_1^{-1}-x_0}{x_2^{-1}}\right)Y_{W}^{*}(Y(u,x_0)v,x_2)\alpha,
\end{eqnarray}
where $u^{(i)},v^{(i)}$ $(i=1,\dots,r)$ are vectors in $V$ such that (\ref{eSjacobi-module}) holds with $f_{i}(x)=1$.
Let $n\in \Z$. Applying $\Res_{x_0}\Res_{x_1}x_1^{n}$ to (\ref{edual-jacobi-0}), we get
\begin{eqnarray*}
&&Y_{W}^{*}(v,x_2)\Res_{x_1}x_1^{n}Y_{W}^{*}(u,x_1)\alpha 
-\Res_{x_1}x_1^{n}\sum_{i=1}^{r}Y_{W}^{*}(u^{(i)},x_{1})Y_{W}^{*}(v^{(i)},x_2)\alpha\   \   \nonumber\\
&=&\Res_{x_0}(x_2^{-1}+x_0)^{-n-2}Y_{W}^{*}(Y(u,x_0)v,x_2)\alpha\\
&=&\sum_{j\ge 0}\binom{-n-2}{j}x_2^{n+2+j}Y_{W}^{*}(u_{j}v,x_2)\alpha.
\end{eqnarray*}
That is,
\begin{eqnarray*}
&&Y_{W}^{*}(v,x_2)\Res_{x_1}x_1^{n}Y_{W}^{*}(u,x_1)\alpha \nonumber\\
&=&\sum_{j\ge 0}\binom{-n-2}{j}x_2^{n+2+j}Y_{W}^{*}(u_{j}v,x_2)\alpha+
\Res_{x_1}x_1^{n}\sum_{i=1}^{r}Y_{W}^{*}(u^{(i)},x_{1})Y_{W}^{*}(v^{(i)},x_2)\alpha.
\end{eqnarray*}
Since $\alpha\in D(W)$ and $u_{j}v=0$ for $j$ sufficiently large, we get
$$Y_{W}^{*}(v,x_2)\Res_{x_1}x_1^{n}Y_{W}^{*}(u,x_1)\alpha \in W^{*}((x_2)).$$
This proves $Y_{W}^{*}(u,x)\alpha\in D (W)((x))$ for any $u\in V,\ n\in \Z$.

In the first part, we have proved that $Y_{W}^{*}(\cdot,x)$ gives rise to a linear map from $V$ to 
$\Hom (D(W),D(W)((x)))$.
It is clear that $Y_{W}^{*}({\bf 1},x)=1_{W^{*}}$. 
Furthermore, let $u,v\in V,\ \alpha \in D(W)$. Assume that $\bar{u}^{(j)},\bar{v}^{(j)}\in V,\ j=1,\dots, s$ such that
$$Y(v,x)u=\sum_{j=1}^{s}e^{x\D}Y(\bar{u}^{(j)},-x)\bar{v}^{(j)}.$$
Then using Lemma \ref{lY*D} we obtain 
\begin{eqnarray}\label{edual-jacobi-3}
&&x_0^{-1}\delta\left(\frac{x_2^{-1}-x_1^{-1}}{x_0}\right)Y_{W}^{*}(u,x_1)Y_{W}^{*}(v,x_2)\alpha\nonumber\\
&&\hspace{1cm} -x_0^{-1}\delta\left(\frac{x_1^{-1}-x_2^{-1}}{-x_0}\right)
\sum_{j=1}^{s}Y_{W}^{*}(\bar{u}^{(j)},x_{2})Y_{W}^{*}(\bar{v}^{(j)},x_1)\alpha\nonumber\\
&=&x_1\delta\left(\frac{x_2^{-1}-x_0}{x_1^{-1}}\right)Y_{W}^{*}(Y(v,x_0)u,x_1)\alpha\nonumber\\
&=&x_1\delta\left(\frac{x_2^{-1}-x_0}{x_1^{-1}}\right)Y_{W}^{*}(e^{x_0\D}Y^{o}(u,-x_0)v,x_1)\alpha\nonumber\\
&=&x_1\delta\left(\frac{x_2^{-1}-x_0}{x_1^{-1}}\right)e^{-x_0x_1^{2}\frac{\partial}{\partial x_1}}
Y_{W}^{*}(Y^{o}(u,-x_0)v,x_1)\alpha\nonumber\\
&=&x_1\delta\left(\frac{x_2^{-1}-x_0}{x_1^{-1}}\right)
Y_{W}^{*}\left(Y^{o}(u,-x_0)v,\frac{1}{x_1^{-1}+x_0}\right)\alpha\nonumber\\
&=&x_1\delta\left(\frac{x_2^{-1}-x_0}{x_1^{-1}}\right)Y_{W}^{*}(Y^{o}(u,-x_0)v,x_2)\alpha,
\end{eqnarray}
noticing that
$$e^{-zx^{2}\frac{d}{dx}}x=\phi_{1}(x,-z)=\frac{x}{1+zx}=\frac{1}{x^{-1}+z}.$$
Then by Proposition \ref{pphi1} we conclude that
$(D(W),Y_{W}^{*})$ carries the structure of a $\phi$-coordinated $V^{o}$-module.
\end{proof}

\begin{remark}\label{tmain2}
Assume $\phi(x,z)=e^{zp(x)\frac{d}{dx}}\cdot x$ with $p(x)\in \C[x,x^{-1}]$.
Let $(W,Y_{W})$ be a $\phi$-coordinated $V$-module.
We believe that it is also true that $(D(W),Y_{W}^{*})$ is a $\phi^{*}$-coordinated $V^{o}$-module 
with $\phi^{*}(x,z)$ defined as in Lemma \ref{lphistar}. What we lack is a Jacobi identity for $\phi$-coordinated $V$-modules.
\end{remark}

\begin{remark}\label{rold}
Let $V$ be a vertex operator algebra and let $(W,Y_{W})$ be a module for
 $V$ viewed as a vertex algebra.
For $v\in V$, following \cite{fhl} define $Y_{W}'(v,x)\in (\End W^{*})[[x,x^{-1}]]$ by
$$\< Y_{W}'(v,x)\alpha,w\>=\<\alpha,Y_{W}\left(e^{xL(1)}(-x^{-2})^{L(0)}v,x\right)w\>$$
for $\alpha\in W^{*},\ w\in W$. It can be readily seen that
$$D(W)=\{ \alpha\in W^{*}\ |\  Y_{W}'(v,x)\alpha\in W^{*}((x)) \   \mbox{ for all }v\in V\}.$$
It was proved in \cite{Li-some} that $(D(W),Y_{W}')$ is a module for $V$ viewed as a vertex algebra.
\end{remark}

The following is an interpretation of Theorem 5.2.1 of \cite{fhl} in terms of $\phi$-coordinated modules:

\begin{prop}\label{pfhl-dual}
Let $V$ be a vertex operator algebra and let $W$ be a vector space equipped with a linear map
$$Y_{W}(\cdot,x): V\rightarrow \Hom (W,W((x)))\subset (\End W)[[x,x^{-1}]]$$
such that $Y_{W}({\bf 1},x)=1_{W}$. For $v\in V$, set
$$Y_{W}^{new}(v,x)=Y_{W}\left(e^{xL(1)}(-x^{-2})^{L(0)}v,x\right).$$
Then $(W,Y_{W})$ is a $\phi$-coordinated $V$-module with $\phi(x,z)=\frac{x}{1+zx}$
if and only if $(W,Y_{W}^{new})$ is a module for $V$ viewed as a vertex algebra. 
Furthermore, this gives rise to an isomorphism between the categories of  $\phi$-coordinated $V$-modules and $V$-modules
with $V$ viewed as a vertex algebra.
\end{prop}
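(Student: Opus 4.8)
The plan is to reduce the statement about the vertex operator algebra $V$ to the already-established Theorem~\ref{tmain1} and the known Frenkel-Huang-Lepowsky theorem (\cite{fhl}, Theorem 5.2.1), by exploiting the fact that $V$, being a vertex operator algebra, is in particular a vertex algebra, so that the opposite algebra $V^{o}$ coincides with $V$ (Remark~\ref{rdual-qva}) and the map $\S$ is constant (indeed trivial). The key observation is that the twist operator $e^{xL(1)}(-x^{-2})^{L(0)}$ appearing in $Y_{W}^{new}$ is exactly the operator that, via the Virasoro action, implements on the vertex-operator side the coordinate change $\phi(x,z)=\frac{x}{1+zx}=\phi_{1}(x,-z)$ on the $\phi$-coordinated side. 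So the first step is to verify that the assignment $Y_{W}\mapsto Y_{W}^{new}$ and its inverse are mutually inverse bijections on linear maps $V\to\Hom(W,W((x)))$ fixing the vacuum, which follows from the invertibility of $e^{xL(1)}(-x^{-2})^{L(0)}$ on $V$ together with the property $L(0)v$ having finitely many weights, ensuring all substitutions make sense in $\C((x))$.

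The main argument is the equivalence of the two module structures. First I would take a $\phi$-coordinated $V$-module $(W,Y_{W})$ with $\phi(x,z)=\frac{x}{1+zx}$ and show $(W,Y_{W}^{new})$ is an ordinary $V$-module. Here I would invoke Proposition~\ref{pphi1}, which recasts the $\phi$-coordinated associativity as the Jacobi identity (\ref{phi1-jacobi}) written in the variables $x_{1}^{-1},x_{2}^{-1}$; substituting $x_{i}\mapsto x_{i}^{-1}$ and applying the Virasoro conjugation identities of \cite{fhl} (Section 5.2) to convert $Y_{W}(\cdot,x^{-1})$ into $Y_{W}^{new}(\cdot,x)$, the identity (\ref{phi1-jacobi}) transforms into the ordinary module Jacobi identity (\ref{eSjacobi-module}) with all $f_{i}=1$, i.e.\ the Jacobi identity for $V$ as a vertex algebra. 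Conversely, starting from an ordinary $V$-module $(W,Y_{W}^{new})$, the same computation run backwards, together with Lemma~\ref{lY*D} (or rather its $L(0)$-analogue governing the effect of $(-x^{-2})^{L(0)}$), produces the $\phi$-coordinated associativity (\ref{ephi-assoc}). Concretely, I expect the cleanest route is to observe that composing the duality construction of Theorem~\ref{tmain1} with the Frenkel-Huang-Lepowsky contragredient identifies, for the module $W$, the space $D(W)$ and the map $Y_{W}^{*}$ with $Y_{W}'$ up to the twist recorded in Remark~\ref{rold}, thereby transferring the equivalence directly.

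The functoriality claim is then routine: a linear map $\theta:W_{1}\to W_{2}$ intertwines $Y_{W_{1}}$ and $Y_{W_{2}}$ if and only if it intertwines $Y_{W_{1}}^{new}$ and $Y_{W_{2}}^{new}$, since the twist $e^{xL(1)}(-x^{-2})^{L(0)}$ acts on $V$ only, not on $W$, so commutes with $\theta$ formally; hence the bijection on objects upgrades to an isomorphism of categories.

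The step I expect to be the main obstacle is verifying precisely that the Virasoro conjugation identities carry the delta-function Jacobi identity (\ref{phi1-jacobi}) to (\ref{eSjacobi-module}): one must check that applying $e^{xL(1)}(-x^{-2})^{L(0)}$ inside $Y_{W}(\cdot,x^{-1})$ interacts correctly with the argument $Y(u,-z)v$ on the right-hand side of (\ref{phi1-jacobi}), i.e.\ that the conformal structure is compatible with the coproduct-like term $Y(u,-z)v$. This requires the full strength of the two central relations in \cite{fhl}, namely $e^{xL(1)}Y(v,z)e^{-xL(1)}$ and $(-x^{-2})^{L(0)}Y(v,z)(-x^{-2})^{-L(0)}$ expressed as $Y(\cdot,\cdot)$ with transformed arguments, and careful bookkeeping of the resulting change of variable $z\mapsto -z$ and $x\mapsto x^{-1}$ to land exactly on $\phi_{1}(x,-z)$. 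Once that compatibility is secured, everything else is formal manipulation of delta functions as already exhibited in the proof of Proposition~\ref{pphi1}.
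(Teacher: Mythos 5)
Your primary argument is essentially the paper's proof: invoke Proposition \ref{pphi1} to replace the $\phi$-coordinated associativity by the Jacobi identity (\ref{phi1-jacobi}), rewrite the delta functions (using $x_2^{-1}-x_1^{-1}=\frac{x_1-x_2}{x_1x_2}$ and the substitution $x_0=zx_1x_2$, rather than an actual inversion $x_i\mapsto x_i^{-1}$ of the vertex-operator variables --- note $Y_W^{new}(v,x)=Y_W(\Phi(x)v,x)$ keeps the same variable), and then apply the Frenkel--Huang--Lepowsky conjugation formula $Y\bigl(\Phi(x_2+x_0)u,-\frac{x_0}{(x_2+x_0)x_2}\bigr)\Phi(x_2)v=\Phi(x_2)Y(u,x_0)v$ with $\Phi(x)=e^{xL(1)}(-x^{-2})^{L(0)}$ to land on the ordinary Jacobi identity for $Y_W^{new}$; since every step is an equivalence, both directions come at once. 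You correctly isolate this conjugation formula as the crux. One caution: the ``cleanest route'' you float --- composing Theorem \ref{tmain1} with the FHL contragredient and Remark \ref{rold} to ``transfer the equivalence directly'' --- does not work for an arbitrary $W$. Theorem \ref{tmain1} puts a structure on the subspace $D(W)\subseteq W^{*}$, not on $W$ itself, and there is no natural isomorphism $W\cong D(D(W))$ for a general (ungraded) vector space $W$, so the duality construction cannot be inverted to recover the desired ``if and only if'' on $W$. Stick with the direct delta-function computation; also, Lemma \ref{lY*D} plays no role here (it concerns $Y_W^{*}$, not $Y_W^{new}$).
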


\begin{proof} It basically follows from the arguments in \cite{fhl}. 
By Proposition \ref{pphi1},  $(W,Y_{W})$ is  a $\phi$-coordinated $V$-module if and only if
\begin{eqnarray*}
&&z^{-1}\delta\left(\frac{x_2^{-1}-x_1^{-1}}{z}\right)Y_{W}(u,x_1)Y_{W}(v,x_2)
-z^{-1}\delta\left(\frac{x_1^{-1}-x_2^{-1}}{-z}\right)Y_{W}(v,x_2)Y_{W}(u,x_1)\   \    \\
&&\hspace{2cm}=x_2\delta\left(\frac{x_1^{-1}+z}{x_2^{-1}}\right)Y_{W}(Y(u,-z)v,x_2)
\end{eqnarray*}
for all $u,v\in V$, which is
\begin{eqnarray*}
&&z^{-1}\delta\left(\frac{x_1-x_2}{zx_1x_2}\right)Y_{W}(u,x_1)Y_{W}(v,x_2)
-z^{-1}\delta\left(\frac{x_2-x_1}{-zx_1x_2}\right)Y_{W}(v,x_2)Y_{W}(u,x_1)\   \    \\
&&\hspace{2cm}=x_2\delta\left(\frac{x_2+zx_1x_2}{x_1}\right)Y_{W}(Y(u,-z)v,x_2).
\end{eqnarray*}
Setting $x_0=zx_1x_2$, we get
\begin{eqnarray}\label{esecond}
&&x_0^{-1}\delta\left(\frac{x_1-x_2}{x_0}\right)Y_{W}(u,x_1)Y_{W}(v,x_2)
-x_0^{-1}\delta\left(\frac{x_2-x_1}{-x_0}\right)Y_{W}(v,x_2)Y_{W}(u,x_1)\   \    \nonumber\\
&&\hspace{2cm}=x_1^{-1}\delta\left(\frac{x_2+x_0}{x_1}\right)Y_{W}\left(Y\left(u,-x_0x_1^{-1}x_2^{-1}\right)v,x_2\right)\nonumber\\
&&\hspace{2cm}=x_1^{-1}\delta\left(\frac{x_2+x_0}{x_1}\right)Y_{W}\left(Y\left(u,-\frac{x_0}{(x_2+x_0)x_2}\right)v,x_2\right)
\end{eqnarray}
for all $u,v\in V$. Set
$$\Phi(x)=e^{xL(1)}(-x^{-2})^{L(0)}.$$
We see that (\ref{esecond}) is equivalent to 
\begin{eqnarray}\label{ethird}
\  \  &&x_0^{-1}\delta\left(\frac{x_1-x_2}{x_0}\right)Y_{W}(\Phi(x_1)u,x_1)Y_{W}(\Phi(x_2)v,x_2)\nonumber\\
&&\   \    \   \   \   -x_0^{-1}\delta\left(\frac{x_2-x_1}{-x_0}\right)Y_{W}(\Phi(x_2)v,x_2)Y_{W}(\Phi(x_1)u,x_1)\nonumber\\
&=&x_1^{-1}\delta\left(\frac{x_2+x_0}{x_1}\right)Y_{W}\left(Y\left(\Phi(x_1)u,-x_0x_1^{-1}x_2^{-1}\right)\Phi(x_2)v,x_2\right)\nonumber\\
&=&x_1^{-1}\delta\left(\frac{x_2+x_0}{x_1}\right)Y_{W}\left(Y\left(\Phi(x_2+x_0)u,-\frac{x_0}{(x_2+x_0)x_2}\right)\Phi(x_2)v,x_2\right)
\end{eqnarray}
for all $u,v\in V$. The formula (5.2.36) in  \cite{fhl} states 
\begin{eqnarray}
Y\left(\Phi(x_2+x_0)u,-\frac{x_0}{(x_2+x_0)x_2}\right)\Phi(x_2)v=\Phi(x_2)Y(u,x_0)v.
\end{eqnarray}
Then (\ref{ethird}) is equivalent to
\begin{eqnarray*}
&&x_0^{-1}\delta\left(\frac{x_1-x_2}{x_0}\right)Y_{W}^{new}(u,x_1)Y_{W}^{new}(v,x_2)\nonumber\\
&&\  \   \   \  -x_0^{-1}\delta\left(\frac{x_2-x_1}{-x_0}\right)Y_{W}^{new}(v,x_2)Y_{W}^{new}(u,x_1)\nonumber \\
&=&x_1^{-1}\delta\left(\frac{x_2+x_0}{x_1}\right)Y_{W}^{new}(Y(u,x_0)v,x_2)
\end{eqnarray*}
for all $u,v\in V$. Consequently, $(W,Y_{W})$ is  a $\phi$-coordinated $V$-module if and only if
$(W,Y_{W}^{new})$ is a module for $V$ viewed as a vertex algebra.
The ``furthermore'' assertion is clear.
\end{proof}

\bibliographystyle{amsalpha}

\end{document}